\newtheorem{thm}{Theorem}[section]
\newtheorem{lem}[thm]{Lemma}
\newtheorem{prop}[thm]{Proposition}
\newtheorem{cor}[thm]{Corollary}
\theoremstyle{definition}
\newtheorem{defn}[thm]{Definition}
\newtheorem{rem}[thm]{Remark}
\newtheorem*{thmA}{Theorem \hypertarget{thmA}{A}}
\newtheorem*{thmB}{Theorem \hypertarget{thmB}{B}}
\newtheorem*{thmC}{Theorem \hypertarget{thmC}{C}}
\providecommand{\ben}{\begin{enumerate}}
\providecommand{\een}{\end{enumerate}}
\providecommand{\bit}{\begin{itemize}}
\providecommand{\eit}{\end{itemize}}
\providecommand{\bc}[2]{{#1\choose#2}}
\providecommand\gbin[2]{\genfrac{[}{]}{0pt}{}{#1}{#2}}
\providecommand{\floor}[1]{\left\lfloor #1 \right\rfloor}
\providecommand{\ceil}[1]{\left\lceil #1 \right\rceil}
\providecommand{\bool}[1]{\mathbf{2}^{[#1]}}
\providecommand{\size}[1]{\left\lvert {#1} \right\rvert}
\providecommand{\F}{\mathbb{F}}
\providecommand{\Lnq}[2]{\mathcal{L}_{#1}(#2)}
\providecommand{\Aa}{\mathcal{A}}
\providecommand{\Bb}{\mathcal{B}}
\providecommand{\Dd}{\mathcal{D}}
\providecommand{\Ee}{\mathcal{E}}
\providecommand{\Ff}{\mathcal{F}}
\providecommand{\Mm}{\mathcal{M}}
\providecommand{\Ss}{\mathcal{S}}
\providecommand{\Xx}{\mathcal{X}}
\providecommand{\Yy}{\mathcal{Y}}
\providecommand{\shade}{\bigtriangledown}
\providecommand{\shadow}{\bigtriangleup}
\newcommand{\twochain}{\raisebox{-2pt}{\begin{tikzpicture} \fill (0,0) circle (1.2pt) (0,8pt) circle (1.2pt); \draw (0,0pt)--(0,8pt);\end{tikzpicture}}}
\newcommand{\threechain}{\raisebox{-6pt}{\begin{tikzpicture} \fill (0,0) circle (1.2pt) (0,8pt) circle (1.2pt) (0,16pt) circle (1.2pt); \draw (0,0pt)--(0,8pt)--(0,16pt);\end{tikzpicture}}}
\newcommand{\Yposet}{\raisebox{-2pt}{\begin{tikzpicture} \fill (0,0) circle (1.2pt) (0,6pt) circle (1.2pt) (-5pt,10pt) circle (1.2pt)  (5pt,10pt) circle (1.2pt); \draw (0,0)--(0,6pt)--(-5pt,10pt) (0,6pt)--(5pt,10pt);\end{tikzpicture}}}
\newcommand{\Ypposet}{\raisebox{-2pt}{\begin{tikzpicture} \fill (-5pt,0) circle (1.2pt) (5pt,0pt) circle (1.2pt) (0pt,4pt) circle (1.2pt)  (0pt,10pt) circle (1.2pt); \draw (-5pt,0)--(0,4pt)--(0pt,10pt) (5pt,0pt)--(0pt,4pt);\end{tikzpicture}}}
\newcommand{\bfly}{\raisebox{-2pt}{\begin{tikzpicture} \fill (-5pt,0) circle (1.2pt) (5pt,0pt) circle (1.2pt) (-5pt,6pt) circle (1.2pt)  (5pt,6pt) circle (1.2pt); \draw (-5pt,0)--(-5pt,6pt)--(5pt,0pt)--(5pt,6pt)--(-5pt,0pt);\end{tikzpicture}}}
\begin{document}

\title{Avoiding Brooms, Forks, and Butterflies in the Linear Lattices}

\author[Shahriari]{Shahriar Shahriari}
\address{Department of Mathematics \\
Pomona College \\
Claremont, CA 91711}
\email{sshahriari@pomona.edu}

\author[Yu]{Song Yu}
\thanks{The authors thank Pomona College's Summer Undergraduate Research Program for supporting the second author.}
\address{Department of Mathematics \\
Columbia University \\
New York, NY 10027}
\email{syu@math.columbia.edu}

\subjclass[2000]{Primary  06A07; Secondary 05D05, 05D15}

\keywords{Linear Lattices, Subspace Lattices, normalized matching, Forbidden Poset, extremal combinatorics, fork, broom, butterfly, bowtie}

\date{\today}

\begin{abstract}
Let $n$ be a positive integer, $q$ a power of a prime, and $\Lnq{n}{q}$ the poset of subspaces of an $n$-dimensional vector space over a field with $q$ elements. This poset is a normalized matching poset and the set of subspaces of dimension $\floor{n/2}$ or those of dimension $\ceil{n/2}$ are the only maximum-sized anti-chains in this poset. Strengthening this well-known and celebrated result, we show that, except in the case of $\Lnq{3}{2}$, these same collections of subspaces are the only maximum-sized families in $\Lnq{n}{q}$ that avoid both a  $\wedge$ and a  $\vee$ as a subposet. We generalize some of the results to brooms and forks, and we also show that the union of the set of subspaces of dimension $k$ and $k+1$, for $k = \floor{n/2}$ or $k = \ceil{n/2}-1$, are the only maximum-sized families in $\Lnq{n}{q}$ that avoid a butterfly \bfly\ (definitions below).
\end{abstract}

\maketitle

\section{Introduction}

Let $u$ be a positive integer, and let $Q = \{a_0, a_1, \ldots, a_u\}$ be a set of $u+1$ elements. Define a partial order on $Q$ by requiring that, for $1 \leq i \leq u$, $a_i  < a_0$ (and, for $0 \leq i \leq u$, $a_i \leq a_i$).  The poset $Q$ is called a \emph{$u$-broom}, and $a_0$ is the \emph{handle} of the broom. Likewise, $Q$ is called a \emph{$u$-fork} with $a_0$ as its \emph{handle} if, for $1 \leq i \leq u$, $a_0 < a_i$. We denote a $2$-broom by $\wedge$ and a $2$-fork  by $\vee$. See Figure \ref{fig:Broomfork}.

\begin{figure}[h]
\begin{tikzpicture}
\foreach \x in {-.5,0,0.5,1.5}{
\fill (\x,0) circle (3pt);
\draw[line width=1] (0.5,1) edge (\x,0);
}
\foreach \x in {.8,1,1.2}{
\fill (\x,0) circle (1pt);
}
\fill (0.5,1) circle (3pt);
\end{tikzpicture}\qquad
\begin{tikzpicture}[xshift=5]
\foreach \x in {-.5,0,0.5,1.5}{
\fill (\x,1) circle (3pt);
\draw[line width=1] (0.5,0) edge (\x,1);
}
\foreach \x in {.8,1,1.2}{
\fill (\x,1) circle (1pt);
}

\fill (0.5,0) circle (3pt);
\end{tikzpicture}
\caption{The Hasse diagram of a broom (left) and a fork (right)\label{fig:Broomfork}}
\end{figure}
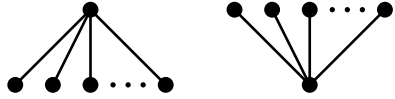

If $P$ is a poset, then $\leq_P$ denotes the partial order on $P$.
We say the poset $P$ \emph{contains} the poset $Q$ as a subposet, if there exists a subset $P_0$ of $P$ and a bijective map $\theta \colon Q \to P_0$ such that, for all $x, y \in Q$, if $x \leq_Q y$, then $\theta(x) \leq_P \theta(y)$. Note that the elements of $P_0$ are allowed to have other relations as well. Hence, for example, a chain of size $4$ contains a $3$-broom as well as a $3$-fork. If $P$ does \emph{not} contain $Q$ as a subposet, then we say that $P$ is $Q$-\emph{free}.

Let $P$, $P_1$, $\ldots$, $P_k$ be posets. Then $P$ is $(P_1, P_2, \ldots, P_k)$-\emph{free} if $P$ does not contain any of  $P_1$, $\ldots$, $P_k$ as subposets. We denote by $\mathrm{ex}(P; P_1, \ldots, P_k)$ the size of the largest subset of $P$ that is $(P_1, P_2, \ldots, P_k)$-free. (Many authors have used the notation $\mathrm{La}(P; P_1, \ldots, P_k)$ instead of $\mathrm{ex}(P; P_1, \ldots, P_k)$.)

Let $n$ be a positive integer, $q$ a power of prime, $\F_q$ a field of order $q$, and $(\F_q)^n$ a vector space of dimension $n$ over $\F_q$. We denote by $\Lnq{n}{q}$ the poset of subspaces of $(\F_q)^n$ ordered by inclusion. In analogy with the binomial coefficients, define $[n]_q = \frac{q^n-1}{q-1} = q^{n-1} + \cdots + q + 1$, $[n]_q! = [n]_q [n-1]_q \cdots [2]_q [1]_q$, and $\gbin{n}{k}_q = \frac{[n]_q!}{[k]_q!\ [n-k]_q!}$ for $0 \leq k \leq n$. The integers $\gbin{n}{k}_q$ are called \emph{Gaussian coefficients}, and, for $0 \leq k \leq n$, the number of subspaces of $(\F_q)^n$ of dimension $k$ is $\gbin{n}{k}_q$. (See van Lint and Wilson \cite[Chapter 24]{vanLintWil:01}.)
 Our first theorem is

\begin{thmA}\label{thm:Anowedgevee}
Let $n \geq 2$ be an integer, $q$ a power of a prime.  Then
$$\mathrm{ex}(\Lnq{n}{q}; \wedge, \vee) = \gbin{n}{\floor{n/2}}_q.$$
Moreover, if $\Ff$ is a subposet of $\Lnq{n}{q}$ of size $\gbin{n}{\floor{n/2}}_q$ that contains neither a $\wedge$ nor a $\vee$, then
\begin{enumerate}
\item\ if $n$ is even, then $\Ff$ consists of all subspaces of dimension $n/2$, and
\item\ if $n$ is odd, and either $n > 3$ or $q > 2$, then $\Ff$ consists either of all subspaces of dimension $\floor{n/2}$ or of all subspaces of dimension $\ceil{n/2}$, and
\item\ if $n = 3$ and $q = 2$, then there are $4$ possible configurations for $\Ff$.
\end{enumerate}
\end{thmA}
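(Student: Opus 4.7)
First I would analyze the comparability structure forced on $\Ff$ by $(\wedge,\vee)$-freeness. Since a $3$-chain contains both a $\wedge$ and a $\vee$, $\Ff$ contains no $3$-chain; moreover, no element of $\Ff$ may have two strict lower elements within $\Ff$ (else $\wedge$) nor two strict upper elements (else $\vee$). Consequently the comparability graph of $\Ff$ is a disjoint union of edges and isolated vertices, yielding a decomposition $\Ff = S \sqcup L \sqcup U$ with a bijection $\sigma : L \to U$ satisfying $a < \sigma(a)$ in $\Lnq{n}{q}$ and no other comparabilities within $\Ff$. In particular $|\Ff| = |S| + 2|L|$, and both $S \cup L$ and $S \cup U$ are antichains.

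By the Sperner theorem for the normalized matching poset $\Lnq{n}{q}$, both $|S| + |L|$ and $|S| + |U|$ are at most $\gbin{n}{\floor{n/2}}_q$. However, adding these two inequalities only yields $|\Ff| + |S| \leq 2\gbin{n}{\floor{n/2}}_q$, which is too weak when $|S|$ is small. The main technical obstacle is to tighten this to $|\Ff| \leq \gbin{n}{\floor{n/2}}_q$. My plan is to construct an injection $\phi : \Ff \hookrightarrow \{V \in \Lnq{n}{q} : \dim V = \floor{n/2}\}$ via Hall's theorem, using the bipartite graph in which $x \in \Ff$ is adjacent to a middle-level $Y$ whenever $x$ and $Y$ are comparable in $\Lnq{n}{q}$. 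The paired elements $(a, \sigma(a))$ of each $2$-chain in $\Ff$ jointly need to be matched to two distinct middle-level representatives, and I would verify Hall's condition $|N(T)| \geq |T|$ for every $T \subseteq \Ff$ by iterated use of normalized matching toward level $\floor{n/2}$, exploiting the structural fact that each pair $(a, \sigma(a))$ has no other comparabilities with $\Ff \setminus \{a, \sigma(a)\}$, which prevents the inequality from degenerating.

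For the characterization of extremal families, suppose $|\Ff| = \gbin{n}{\floor{n/2}}_q$. Tightness of the Hall injection forces the underlying normalized matching inequalities to be equalities, pinning $\Ff$ to a full middle level in the generic case. When $n$ is even, the unique maximum antichain of $\Lnq{n}{q}$ is the level of dimension $n/2$, yielding (i). When $n$ is odd and $(n, q) \neq (3, 2)$, the sharp form of normalized matching between levels $\floor{n/2}$ and $\ceil{n/2}$ excludes any mixing of these two levels at the maximum size, so $\Ff$ lies entirely in one of them, yielding (ii). In the exceptional case $(n, q) = (3, 2)$, i.e., the Fano plane $\mathrm{PG}(2, 2)$, the small parameters leave slack allowing additional extremals. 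Any mixed extremal $\Ff$ must have the bipartite incidence graph between chosen points and chosen lines of maximum degree $1$, and an explicit counting argument in the Fano plane identifies two additional configuration types: $3$ non-collinear points forming a triangle together with the $4$ lines none of which is a side of the triangle, and its point-line dual. Together with the two full middle levels, this yields the $4$ configurations in (iii).

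The main difficulty throughout is the tight size bound in the second paragraph; naively adding the two Sperner inequalities inherently loses a factor of two, and strengthening them requires either the Hall-type matching sketched above or an equivalent compression argument that replaces each pair $(a, \sigma(a))$ by two incomparable middle-level representatives while preserving $(\wedge,\vee)$-freeness.
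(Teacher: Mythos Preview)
Your decomposition $\Ff = S \sqcup L \sqcup U$ is correct, and you rightly note that summing two Sperner bounds is too weak. The genuine gap is the Hall step for odd $n=2k+1$. Once $\Ff$ is confined to levels $k$ and $k+1$ (already the hard case), your injection into level $k$ forces $\phi$ to be the identity on $\Ff\cap\bc{\Lnq{n}{q}}{k}$, so Hall's condition reduces to: for every $T\subseteq \Ff\cap\bc{\Lnq{n}{q}}{k+1}$, one has $\size{\shadow T\setminus\Ff}\ge\size{T}$. Here normalized matching gives only $\size{\shadow T}\ge\size{T}$, since $\gbin{n}{k}_q=\gbin{n}{k+1}_q$. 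The structural fact you invoke---each $a\in\shadow T\cap\Ff$ lies below a \emph{unique} member of $T$---combined with the obvious degree count yields only
\[
\size{\shadow T\setminus\Ff}\ \ge\ \size{T}-\frac{\size{\shadow T\cap\Ff}}{[k+1]_q},
\]
which falls short whenever $\size{\shadow T\cap\Ff}\ge[k+1]_q$. You have not supplied an idea that closes this deficit, and the ``equivalent compression'' you mention at the end is precisely the level-by-level replacement the paper carries out; that argument requires the rank ratio to be at least $2$ and therefore halts exactly at these two middle levels.

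The paper's route is different. After pushing $\Ff$ into levels $k$ and $k+1$ it abandons matching and instead double-counts pairs $(V,[U,W])$ with $V\in\Ff$, $\dim U=1$, $\dim W=n-1$: each interval $[U,W]\cong\Lnq{n-2}{q}$ meets $\Ff$ in at most $\gbin{n-2}{k-1}_q$ elements by induction on $n$, and averaging gives $\size{\Ff}\le\gbin{n}{k}_q$. The base $n=3$ is a detailed multi-claim analysis of the Fano-type structure, and $(n,q)=(5,2)$ needs a separate lemma because the induction lands on the anomalous $\Lnq{3}{2}$. Your uniqueness sketch is likewise too thin: ``tightness of the Hall injection'' and ``the sharp form of normalized matching'' do not by themselves forbid mixing the two equal middle levels; the paper obtains uniqueness from equality in the double count (forcing every interval to be extremal), the inductive characterization inside each interval, and finally the strict Sperner property of $\Lnq{n}{q}$.
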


In the case $\Lnq{3}{2}$, if we ignore the trivial subspaces, and call the 1-dimensional and 2-dimensional subspaces points and lines respectively, then the inclusion relation among the points and lines is given by the familiar Fano plane. This is the one case where there are maximum-sized $(\wedge, \vee)$-free families that contain subspaces of multiple dimensions. These families are illustrated in Figure \ref{fig:FanoExtremes}. Having to account for this special case, the proof becomes a bit more subtle.

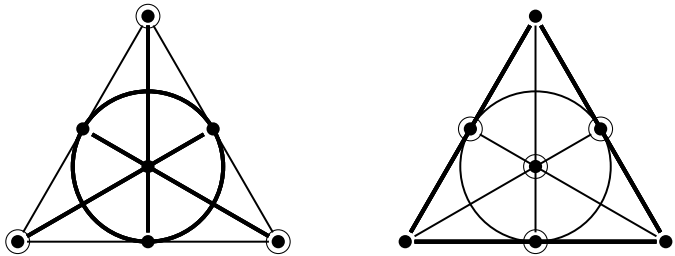
\begin{figure}
\begin{tikzpicture}
\node (P1) at (-1.732,0) {};
\node (P2) at (0,0) {};
\node (P3) at (1.732,0) {};
\node (P4) at (.866,1.5) {};
\node (P5) at (0,3) {};
\node (P6) at (-.866,1.5) {};
\node (P7) at (0,1) {};
\foreach \x in {1,2,3,4,5,6,7}{
\fill (P\x) circle (2.5pt);
}
\path[line width=.75pt] (P1) edge (P3)
(P1) edge (P4)
(P1) edge (P5)
(P3) edge (P5)
(P3) edge (P6)
(P5) edge (P2);
\draw[line width=.75pt] (P7) circle (1); 
\foreach \x in {1,3,5}{
\draw (P\x) circle (4.5pt);
\path[line width=1.5pt] (P1) edge (P4)
(P3) edge (P6)
(P5) edge (P2);
\draw[line width=1.5pt] (P7) circle (1); 
}
\end{tikzpicture}\qquad \qquad
\begin{tikzpicture}
\node (P1) at (-1.732,0) {};
\node (P2) at (0,0) {};
\node (P3) at (1.732,0) {};
\node (P4) at (.866,1.5) {};
\node (P5) at (0,3) {};
\node (P6) at (-.866,1.5) {};
\node (P7) at (0,1) {};
\foreach \x in {1,2,3,4,5,6,7}{
\fill (P\x) circle (2.5pt);
}
\path[line width=.75pt] (P1) edge (P3)
(P1) edge (P4)
(P1) edge (P5)
(P3) edge (P5)
(P3) edge (P6)
(P5) edge (P2);
\draw[line width=.75pt] (P7) circle (1); 
\foreach \x in {2,4,6,7}{
\draw (P\x) circle (4.5pt);
\path[line width=1.5pt] (P1) edge (P3)
(P1) edge (P5)
(P3) edge (P5);
}
\end{tikzpicture} 
\caption{In the Fano plane, take the three vertices of a triangle, the three edges that each contain just one of these vertices, and the one edge that contains none of these vertices. This collection of 4 edges and 3 vertices, as well as its dual on the right (the three edges of a triangle together with the vertices that are not their points of intersection), give maximum-sized $(\wedge, \vee)$-free families containing subspaces of multiple dimensions.\label{fig:FanoExtremes}}
\end{figure}

If $n$ is even, there is only one largest level in $\Lnq{n}{q}$, and a (very similar) proof for this part of Theorem A was already given by Salerno and Shahriari \cite{Salerno:09}. For this case, we actually generalize the result to brooms and forks.

\begin{thmB}\label{thm:nevenbroomfork}
Let $n$ be an even positive integer, $q$ a power of a prime, and $u$ and $v$ positive integers. Assume $u \leq q$ and $v \leq q$. Then the family of subspaces of dimension $n/2$ is the only maximum-sized $u$-broom and $v$-fork free subposet of $\Lnq{n}{q}$. In particular, the maximum size of a $u$-broom and $v$-fork free family in $\Lnq{n}{q}$ is $\gbin{n}{n/2}_q$.
\end{thmB}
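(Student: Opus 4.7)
The lower bound is immediate: the family $L_{n/2}$ of all $n/2$-dimensional subspaces is an antichain of size $\gbin{n}{n/2}_q$ and, being an antichain, vacuously avoids every $u$-broom and every $v$-fork. The content of the theorem is therefore the matching upper bound and the uniqueness of the extremal family.

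Let $\Ff \subseteq \Lnq{n}{q}$ be a $(u\text{-broom}, v\text{-fork})$-free family, set $m = n/2$, let $L_k$ denote the subspaces of dimension $k$, and write $\Ff_k = \Ff \cap L_k$, $f_k = |\Ff_k|$, and $N_k = \gbin{n}{k}_q$. The forbidden-subposet hypothesis unpacks to: every $W \in \Ff$ has at most $u - 1$ elements of $\Ff$ strictly below it and at most $v - 1$ strictly above it. The hypothesis $u, v \leq q$ buys the following \emph{slack}. For any $W$ of dimension $k < m$, the number of covers of $W$ in $L_{k+1}$ equals $[n-k]_q \geq [m+1]_q \geq q + 1 > v - 1$, so at least $[n-k]_q - (v-1) \geq 2$ of those covers must lie in $L_{k+1} \setminus \Ff_{k+1}$. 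Symmetrically, for $k > m$, broom-freeness combined with $u \leq q$ gives at least two co-covers of $W$ in $L_{k-1} \setminus \Ff_{k-1}$. Iterating up to level $m$, $\gbin{n-k}{m-k}_q \geq [m+1]_q \geq q + 1$, while at most $v - 1$ of the $m$-dimensional superspaces of $W$ can belong to $\Ff_m$.

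The plan is to combine this slack with the normalized matching property of $\Lnq{n}{q}$ to construct an injection $\phi \colon \Ff \to L_m$ that is the identity on $\Ff_m$ and sends each $W \in \Ff \setminus \Ff_m$ to an $m$-dimensional subspace comparable with $W$ (containing $W$ if $\dim W < m$, contained in $W$ if $\dim W > m$) and lying in $L_m \setminus \Ff_m$. I would build $\phi$ level by level, working inward from the extremes: at each step the required Hall's condition on the bipartite graph between $\Ff_k$ and $L_{k \pm 1} \setminus \Ff_{k \pm 1}$ reduces, via the iterated normalized-matching inequality $|\shadow(T)|/N_{k+1} \geq |T|/N_k$ and its downward counterpart, to an arithmetic inequality guaranteed by the slack above. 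Injectivity of $\phi$ immediately yields $|\Ff| \leq |L_m| = \gbin{n}{n/2}_q$.

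For uniqueness, suppose $|\Ff| = \gbin{n}{n/2}_q$, so that $\phi$ is forced to be a bijection and every element of $L_m \setminus \Ff_m$ is hit exactly once. If some $W \in \Ff$ had dimension $k \neq m$, the $\gbin{n-k}{m-k}_q \geq q + 1$ elements of $L_m$ comparable with $W$ would contain at most $v - 1$ (resp.\ $u - 1$) members of $\Ff_m$, leaving at least two free slots whose $\phi$-preimages must be simultaneously compatible with the comparability requirement and with the broom/fork bounds on the other elements of $\Ff$; chasing these constraints produces a contradiction and forces $\Ff \subseteq L_m$, hence $\Ff = L_m$. The main obstacle I expect is executing the Hall's-theorem verification uniformly across both sides of level $m$ and arranging for the injections from $\Ff_{<m}$ and from $\Ff_{>m}$ to land in disjoint portions of $L_m \setminus \Ff_m$; once the normalized-matching arithmetic is set up cleanly, the slack inequality $[n-k]_q - (v-1) \geq 2$ and its symmetric counterpart should do the remaining work.
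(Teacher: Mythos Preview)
Your size-bound strategy is essentially the paper's, just packaged as a single injection rather than as an iterated ``push.'' The paper replaces the top level $\Bb$ of the family by matched elements of $\shadow\Bb \setminus Q$, checks that the resulting family is still $u$-broom and $v$-fork free, and repeats; dually from below. Framing it this way dissolves your disjoint-landing worry automatically: first push all of $\Ff_{>m}$ down into $L_m$, obtaining a new broom/fork-free family $\Ff'$ supported on levels $\leq m$; \emph{then} push $\Ff'_{<m}$ up, and Hall's condition at that stage is verified against $L_m \setminus \Ff'_m$, which already excludes the images from above. You never need the two injections to be compatible simultaneously.

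The genuine gap is in uniqueness. ``Chasing these constraints produces a contradiction'' is not an argument, and I do not see how to complete it from the bijection $\phi$ alone: knowing that $W\in\Ff_k$ has two free slots in $L_m\setminus\Ff_m$ and that each slot has a $\phi$-preimage somewhere in $\Ff\setminus\Ff_m$ does not by itself force anything inconsistent with the broom/fork bounds. The paper's device is different and sharper. If $\Ff$ has an element off level $m$, push (as above) until the family sits on levels $m$ and $m{+}1$ with something still at level $m{+}1$. Now use the \emph{strict} inequality $\gbin{n}{m}_q/\gbin{n}{m+1}_q > q \geq u$: replacing the level-$(m{+}1)$ part by its full shadow increases the size strictly (this is your slack $\geq 2$, but exploited as a strict gain rather than merely to feed Hall), and the result lives entirely on level $m$ so is trivially broom/fork-free. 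That contradicts maximality. Your proposal has the right ingredients but misses this ``stop one level early, then strictly enlarge'' step that actually delivers uniqueness.
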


We then turn to the butterfy (\bfly), the \textrm{Y} poset (\Yposet), and the \textrm{Y'} poset (\Ypposet). A \emph{butterfly} is a poset $\{a,b,c,d\}$ whose partial order is defined by $a \leq c$, $a \leq d$, $b \leq c$, and $b \leq d$. A \textrm{Y} is a butterfly with the additional requirement that $a \leq b$; dually, a \textrm{Y'} is a butterfly with the additional requirement that $c \leq d$. See Figure \ref{fig:Butterfly}.

\begin{figure}[h]
\begin{tikzpicture}
\fill (-0.5, 0) circle (3pt);
\fill (-0.5, 1) circle (3pt);
\fill (0.5, 0) circle (3pt);
\fill (0.5, 1) circle (3pt);

\draw[line width=1] (-0.5,0) edge (-0.5,1);
\draw[line width=1] (-0.5,0) edge (0.5,1);
\draw[line width=1] (0.5,0) edge (-0.5,1);
\draw[line width=1] (0.5,0) edge (0.5,1);
\end{tikzpicture}\qquad
\begin{tikzpicture}
\fill (-0.5, 0.7) circle (3pt);
\fill (0.5, 0.7) circle (3pt);
\fill (0, 0) circle (3pt);
\fill (0, -0.7) circle (3pt);

\draw[line width=1] (0,0) edge (-0.5,0.7);
\draw[line width=1] (0,0) edge (0.5,0.7);
\draw[line width=1] (0,-0.7) edge (0,0);
\end{tikzpicture}\qquad
\begin{tikzpicture}
\fill (-0.5, -0.7) circle (3pt);
\fill (0.5, -0.7) circle (3pt);
\fill (0, 0) circle (3pt);
\fill (0, 0.7) circle (3pt);

\draw[line width=1] (0,0) edge (-0.5,-0.7);
\draw[line width=1] (0,0) edge (0.5,-0.7);
\draw[line width=1] (0,0.7) edge (0,0);
\end{tikzpicture}\qquad
\caption{The Hasse diagram of a butterfly, a \textrm{Y}, and a \textrm{Y'}, respectively\label{fig:Butterfly}}
\end{figure}
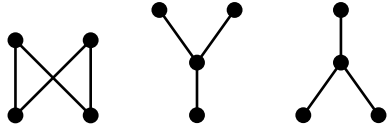

Let $\bc{\Lnq{n}{q}}{k}$ be the collection of all subspaces of dimension $k$ in $\Lnq{n}{q}$. Our result on avoiding butterflies, Ys, and Y's is

\begin{thmC}\label{thm:nobutterfly}
Let $n \geq 3$ be an integer, $q$ a power of a prime.  Then
$$\mathrm{ex}(\Lnq{n}{q}; \bfly) = \mathrm{ex}(\Lnq{n}{q}; \Yposet, \Ypposet) = \gbin{n}{\floor{n/2}}_q + \gbin{n}{\floor{n/2}+1}_q.$$
Moreover, if $\Ff$ is a subposet of $\Lnq{n}{q}$ of size $\gbin{n}{\floor{n/2}}_q+ \gbin{n}{\floor{n/2}+1}_q$ that is butterfly-free or   (Y, Y')-free, then
\begin{enumerate}
\item\ if $n$ is odd, then $\Ff = \bc{\Lnq{n}{q}}{\floor{n/2}} \cup \bc{\Lnq{n}{q}}{\ceil{n/2}}$, and
\item\ if $n$ is even, then either $\Ff = \bc{\Lnq{n}{q}}{n/2} \cup \bc{\Lnq{n}{q}}{n/2+1}$ or $\Ff = \bc{\Lnq{n}{q}}{n/2} \cup \bc{\Lnq{n}{q}}{n/2-1}$.
\end{enumerate}
\end{thmC}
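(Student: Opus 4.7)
My plan is to prove Theorem C by reducing the butterfly case to the $(Y, Y')$ case, verifying the extremal constructions, and then establishing the sharp upper bound for $(Y, Y')$-free families. Since each of $Y$ and $Y'$ is obtained from a butterfly by adding a single comparability, both contain a butterfly as a subposet; hence every butterfly-free family is automatically $(Y, Y')$-free and $\mathrm{ex}(\Lnq{n}{q}; \bfly) \le \mathrm{ex}(\Lnq{n}{q}; \Yposet, \Ypposet)$. Proving the upper bound $\gbin{n}{\floor{n/2}}_q + \gbin{n}{\floor{n/2}+1}_q$ for $(Y, Y')$-free families will therefore yield both equalities in the statement.

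The lower bound comes from checking the constructions. The family $\bc{\Lnq{n}{q}}{\floor{n/2}} \cup \bc{\Lnq{n}{q}}{\floor{n/2}+1}$, together with the alternative $\bc{\Lnq{n}{q}}{n/2-1} \cup \bc{\Lnq{n}{q}}{n/2}$ when $n$ is even, is butterfly-free and hence $(Y, Y')$-free: neither $Y$ nor $Y'$ can appear since both require three distinct dimensions, and a butterfly within two consecutive levels $k, k+1$ would require distinct $A, B$ of dimension $k$ both sitting inside distinct $C, D$ of dimension $k+1$, but then $A + B$ has dimension $k+1$ and lies in $C \cap D$, forcing $C = A + B = D$, a contradiction.

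The upper bound is the heart of the matter. For $(Y, Y')$-free $\Ff$, the structural key is: if $A \in \Ff$ has both a strict $\Ff$-predecessor and a strict $\Ff$-successor, then Y-freeness forces the successor to be unique and Y'-freeness forces the predecessor to be unique. Hence $\Ff$ decomposes into the antichain $\mathcal{A}$ of $\Ff$-maximal elements, the antichain $\mathcal{B}$ of $\Ff$-minimal non-maximal elements, and a set of \emph{internal} elements each lying on a unique chain in $\Ff$ bridging some element of $\mathcal{B}$ to some element of $\mathcal{A}$, with no branching at internal vertices. I would then bound $|\Ff|$ by a weighted chain/flag count, using the normalized matching property of $\Lnq{n}{q}$ to control $\mathcal{A}$ and $\mathcal{B}$ simultaneously. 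The expected mechanism is to pair each internal or non-isolated $\Ff$-minimal element with its unique successor in $\Ff$, treat the resulting "Hasse edges" via normalized matching between the appropriate consecutive levels, and so force the support of $\Ff$ to collapse essentially onto two consecutive levels; log-concavity of the Gaussian coefficients then pins down the extremal level choices and, after tracking equality, the uniqueness statement.

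The main obstacle is precisely this last reduction: a crude LYM-style inequality is insufficient, because a long chain in $\Ff$ can have LYM-weight larger than $2$ even while being very small. What is needed is a chain-count that treats $\mathcal{A}$, $\mathcal{B}$, and the internal chains separately, exploiting that each internal element of $\Ff$ "moves up" by a unique step, and combining this with the normalized matching of $\Lnq{n}{q}$ to rule out non-consecutive supports and simultaneously bound the two bounding antichains against two adjacent levels. Once the sharp bound is established, analyzing equality in each step of the count yields the exact list of extremal configurations described in the theorem.
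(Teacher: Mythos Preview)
Your decomposition of a $(Y,Y')$-free family into maximal elements $\mathcal{A}$, minimal non-maximal elements $\mathcal{B}$, and internal elements is exactly the one the paper uses, and your lower-bound argument is fine. The gap is in the upper bound. You explicitly reject an LYM-type inequality as ``insufficient'' and propose instead a normalized-matching argument that would ``rule out non-consecutive supports.'' But the paper's proof shows that a refined LYM inequality \emph{is} precisely what works: after removing $\{0\}$ and $(\F_q)^n$, one has
\[
\sum_{V\in\Ff}\gbin{n}{\dim V}_q^{-1}\le 2,
\]
and this immediately gives the size bound via the unimodality of the Gaussian coefficients. What you are missing is the mechanism that produces this inequality: a chain-counting lemma for two antichains $\Mm$ and $\Aa$ with each $A\in\Aa$ comparable to a unique $M\in\Mm$, which yields
\[
\sum_{M\in\Mm}\gbin{n}{\dim M}_q^{-1}+\frac{q}{q+1}\sum_{A\in\Aa}\gbin{n}{\dim A}_q^{-1}\le 1.
\]
The coefficient $\tfrac{q}{q+1}$ comes from the fact that $\gbin{m}{j}_q\ge[2]_q=q+1$ whenever $1\le j\le m-1$, which is exactly where the linear-lattice arithmetic enters. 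Applying this twice (once with $\Mm=\mathcal{A}$, once with $\Mm=\mathcal{B}$) and adding gives the internal elements a coefficient $\tfrac{2q}{q+1}>1$, so they are overcounted rather than undercounted, and the LYM sum over all of $\Ff$ is at most $2$.

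Your alternative route via normalized matching is left vague, and it is not clear how you would ``pair each internal element with its unique successor'' to force the support onto two consecutive levels: unlike the broom/fork setting earlier in the paper, here there is no monotone pushing operation that preserves $(Y,Y')$-freeness, since replacing elements by shadows or shades can easily create a $Y$ or $Y'$. The uniqueness analysis also needs the equality case of the LYM inequality (forcing the internal part to be empty) together with a separate argument---the paper uses Dilworth's theorem and connectivity of the bipartite graph between levels $n/2-1$ and $n/2+1$---to show that in the even case the non-middle level must be taken entirely from one side.
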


In studying $\Lnq{n}{q}$, our guide is the boolean lattice  $\bool{n}$. Let $n$ be a positive integer and $[n] = \{1, 2, \ldots, n\}$ be a set with $n$ elements, then the \emph{boolean lattice} (or \emph{subset lattice}) of order $n$, denoted by $\bool{n}$, is the poset of subsets of $[n]$ ordered by inclusion. Often results about the boolean lattices have counter parts for the linear lattices, and there are many instances where if you take a result for $\Lnq{n}{q}$ and ``let $q \to 1$'' then you get the correct result for the $\bool{n}$. The literature on forbidden posets in $\bool{n}$ is vast and we limit ourselves to directly relevant items.  

Denote by $\twochain$ a chain of length $1$ (and size $2$). A collection of the elements of a poset that avoids $\twochain$ is called an \emph{anti-chain}. The celebrated Sperner Theorem (Sperner \cite{Sperner:28}) says that $\mathrm{ex}(\bool{n}; \twochain\ ) = \binom{n}{\floor{n/2}}$ and that the only $\twochain$-free families of size $\binom{n}{\floor{n/2}}$ in $\bool{n}$ are the collection of subsets of size $\floor{n/2}$ and the collection of subsets of size $\ceil{n/2}$. The corresponding ``strict Sperner property'' for $\Lnq{n}{q}$ is identical and well known: 
\begin{thm}[See Example 4.6.2, page 175 of Engel \cite{Engel:97}]\label{thm:maxantichainLnq} The largest size of an anti-chain in $\Lnq{n}{q}$ is $\gbin{n}{\floor{n/2}}_q$ and the only anti-chains of this size are the collection of subspaces of dimension $\floor{n/2}$ and the collection of subspaces of dimension $\ceil{n/2}$. 
\end{thm}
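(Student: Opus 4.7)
The plan is to prove this via the LYM-type (normalized matching) inequality for $\Lnq{n}{q}$. First I would observe that, since $GL_n(\F_q)$ acts transitively on each level, the Hasse diagram between consecutive levels $k-1$ and $k$ is biregular: every $k$-dimensional subspace covers the same number of $(k-1)$-dimensional subspaces and dually. Double counting then yields the normalized matching inequality
$$\frac{|\shade(\Aa)|}{\gbin{n}{k-1}_q} \geq \frac{|\Aa|}{\gbin{n}{k}_q}$$
for every $\Aa \subseteq \bc{\Lnq{n}{q}}{k}$. The standard weighted-chain argument (or symmetric chain decomposition) converts this into the LYM inequality
$$\sum_{k=0}^{n} \frac{|\Ff \cap \bc{\Lnq{n}{q}}{k}|}{\gbin{n}{k}_q} \leq 1$$
for every antichain $\Ff$. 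Since the Gaussian coefficients are log-concave, hence strictly unimodal with maxima attained only at $k \in \{\floor{n/2}, \ceil{n/2}\}$, replacing each denominator by $\gbin{n}{\floor{n/2}}_q$ gives $|\Ff| \leq \gbin{n}{\floor{n/2}}_q$, establishing the size bound.

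For the uniqueness, assume $|\Ff| = \gbin{n}{\floor{n/2}}_q$. Equality throughout the displayed inequalities forces $|\Ff \cap \bc{\Lnq{n}{q}}{k}| = 0$ whenever $\gbin{n}{k}_q < \gbin{n}{\floor{n/2}}_q$, so $\Ff$ is concentrated on the middle level(s). If $n$ is even there is a unique middle level, which $\Ff$ must fill. If $n$ is odd, set $A = \Ff \cap \bc{\Lnq{n}{q}}{\floor{n/2}}$ and $B = \Ff \cap \bc{\Lnq{n}{q}}{\ceil{n/2}}$ with $|A|+|B| = \gbin{n}{\floor{n/2}}_q$. The antichain condition gives $A \cap \shade(B) = \emptyset$, and since the two middle levels have equal size, normalized matching gives $|\shade(B)| \geq |B|$. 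Chaining $|A| \leq \gbin{n}{\floor{n/2}}_q - |\shade(B)| \leq \gbin{n}{\floor{n/2}}_q - |B| = |A|$ forces equality throughout, so $|\shade(B)| = |B|$ and $A$ is exactly the complement of $\shade(B)$ in its level.

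The main obstacle is the last step: showing that $|\shade(B)| = |B|$ with $B \subseteq \bc{\Lnq{n}{q}}{\ceil{n/2}}$ forces $B = \emptyset$ or $B$ is the entire level, which is the strict form of normalized matching in $\Lnq{n}{q}$. One can argue this directly by a transport argument: using that $d := \gbin{\ceil{n/2}}{\floor{n/2}}_q$ equals the degree on \emph{both} sides of the middle-level cover graph, the equation $d|B| = d|\shade(B)|$ forces every $\ceil{n/2}$-subspace containing any element of $\shade(B)$ to lie in $B$. A proper nonempty $B$ then yields a $V \in B$ and a $V' \notin B$ together with a connecting walk $V = V_0, V_1, \ldots, V_m = V'$ in which consecutive subspaces intersect in a $\floor{n/2}$-subspace (such walks exist because the Grassmannian is connected in this sense), and the first index $j$ with $V_j \in B$, $V_{j+1} \notin B$ produces $V_j \cap V_{j+1} \in \shade(B)$ while $V_{j+1} \notin B$, contradicting the closure property. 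Hence either $A$ or $B$ is empty, and $\Ff$ coincides with a single full middle level, completing the proof.
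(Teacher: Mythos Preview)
The paper does not supply its own proof of this statement; it is quoted as a classical fact from Engel's book, and later noted to follow as a corollary of the paper's Theorem~A (any antichain is in particular $(\wedge,\vee)$-free). Your proposal is the standard LYM argument and is correct in substance, which is essentially what the reference to Engel covers.

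One notational slip to fix: throughout you write $\shade$ where you mean $\shadow$. In the paper's conventions $\shade\Aa$ denotes the neighbors one level \emph{up} and $\shadow\Aa$ the neighbors one level \emph{down}; thus with $\Aa\subseteq\bc{\Lnq{n}{q}}{k}$ and denominator $\gbin{n}{k-1}_q$ the numerator should be $|\shadow\Aa|$, and with $B\subseteq\bc{\Lnq{n}{q}}{\ceil{n/2}}$ it is $\shadow B$ (not $\shade B$) that lies at level $\floor{n/2}$ and must be disjoint from $A$. With that correction the chain of reasoning is sound: biregularity gives normalized matching, hence the LYM inequality; strict unimodality of the Gaussian coefficients forces a maximum antichain onto the middle level(s); and for odd $n$ your degree-count plus connectivity of the middle bipartite cover graph correctly yields that $|\shadow B|=|B|$ forces $B=\emptyset$ or $B=\bc{\Lnq{n}{q}}{\ceil{n/2}}$.
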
 
To avoid $\twochain$, you certainly have to avoid both $\wedge$ and $\vee$. Hence maximum-sized anti-chains in $\Lnq{n}{q}$ are among the configurations outlined in Theorem A. Hence, the Sperner property of $\Lnq{n}{q}$ (including the characterization of the maximum-sized anti-chains)  is a corollary of our Theorem A. While in the linear lattices, except for $\Lnq{3}{2}$, the configurations that give a maximum-sized $(\wedge, \vee)$-free family are identical to the maximum-sized anti-chains, this is not so in the boolean lattices.
\begin{thm}[Katona, Tarj\'an, 1983 \cite{KatonaTarjan:83}]
For all $n \geq 2$,
$$\mathrm{ex}\left(\bool{n}; \wedge, \vee \right)=\ 2 \bc{n-1}{\floor{\frac{n-1}{2}}}.$$
\end{thm}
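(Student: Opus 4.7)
The plan is to combine a simple construction with a compression argument and an LYM-type inequality.

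For the lower bound, I would exhibit $(\wedge, \vee)$-free families of size $2\binom{n-1}{\floor{(n-1)/2}}$. When $n = 2m$, the middle level $\binom{[n]}{m}$ is an antichain of size $\binom{2m}{m} = 2\binom{2m-1}{m-1}$. When $n = 2m+1$, I fix $x \in [n]$ and take all $m$-subsets not containing $x$ together with all $(m+1)$-subsets containing $x$; each set is comparable to exactly one other set in the family (namely $A \leftrightarrow A \cup \{x\}$), so the comparability graph is a perfect matching and no $\wedge$ or $\vee$ appears.

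For the upper bound, I would first observe that a $(\wedge, \vee)$-free family $\Ff \subseteq \bool{n}$ decomposes as a disjoint union of isolated singletons $\mathcal{S}$ and comparable pairs $\mathcal{P}$: no element may have two comparable neighbours on the same side (yielding $\wedge$ or $\vee$), nor both an upper and a lower comparable neighbour (yielding a $3$-chain, which contains both forbidden posets). I would then apply a compression showing we may assume every pair has span $|B|-|A|=1$. Given $(A,B) \in \mathcal{P}$ with $|B|-|A|\geq 2$, I pick any $x \in B \setminus A$, set $B' = A \cup \{x\}$, and replace $B$ by $B'$; a short case analysis using the pair/singleton structure shows that $B' \notin \Ff$ and that no other $D \in \Ff$ becomes comparable to $B'$, so $(\Ff \setminus \{B\}) \cup \{B'\}$ remains $(\wedge, \vee)$-free, and iteration flattens every pair to adjacent levels.

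With all pairs of span $1$, I would apply a random-chain argument. Choosing $C$ uniformly among the $n!$ maximal chains of $\bool{n}$, the bound $|C \cap \Ff| \leq 2$, with equality only when $C$ traverses a pair, yields
$$\sum_{T \in \Ff} \frac{1}{\binom{n}{|T|}} = \mathbb{E}\bigl[|C \cap \Ff|\bigr] \leq 1 + \sum_{(A,B) \in \mathcal{P}} \frac{|A|!\,(n-|A|-1)!}{n!}.$$
The identity $\frac{1}{\binom{n}{k}} + \frac{1}{\binom{n}{k+1}} - \frac{k!(n-k-1)!}{n!} = \frac{1}{\binom{n-1}{k}}$ rearranges this to
$$\sum_{S \in \mathcal{S}} \frac{1}{\binom{n}{|S|}} + \sum_{(A,B) \in \mathcal{P}} \frac{1}{\binom{n-1}{|A|}} \leq 1.$$
Writing $M = \binom{n-1}{\floor{(n-1)/2}}$, the elementary bound $\binom{n}{\floor{n/2}} \leq 2M$ (from Pascal's identity and the unimodality of $\binom{n-1}{\cdot}$) implies $1/\binom{n}{|S|} \geq 1/(2M)$ and $1/\binom{n-1}{|A|} \geq 1/M$, so $|\Ff| = |\mathcal{S}| + 2|\mathcal{P}| \leq 2M$, matching the construction.

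The main obstacle is the compression step: verifying that the swap $B \mapsto B'$ introduces no new $\wedge$ or $\vee$. The argument hinges on observing that if some $D \in \Ff \setminus \{A,B\}$ were comparable to $B' = A \cup \{x\}$, then either $D \supseteq A$ or $D \subsetneq B$ with $D \neq A$, and each alternative would give $A$ or $B$ a second comparable partner in the original $\Ff$, contradicting the pair/singleton structure. The case analysis is elementary but must be executed carefully to cover all sub-cases (including $D$ containing $x$ but not contained in $A$).
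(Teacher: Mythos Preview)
The paper does not prove this statement: it is quoted as a known result of Katona and Tarj\'an and used only for context, so there is no ``paper's own proof'' to compare against.

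That said, your argument is correct and is essentially the original Katona--Tarj\'an proof. The construction is the standard one; the structural observation that the comparability graph of a $(\wedge,\vee)$-free family is a matching is right (a $3$-chain contains both forbidden posets); and the compression $B\mapsto B'=A\cup\{x\}$ is sound. In the case analysis you flag as delicate, the key point is simply that $B'=A\cup\{x\}\subsetneq B$, so any $D\subsetneq B'$ satisfies $D\subsetneq B$, forcing a $\wedge$ at $B$ in the original family unless $D=A$; the case $D\supsetneq B'$ gives $D\supsetneq A$ and hence a $\vee$ at $A$. Your LYM computation is also correct: the identity
\[
\frac{1}{\binom{n}{k}}+\frac{1}{\binom{n}{k+1}}-\frac{k!(n-k-1)!}{n!}
=\frac{k!(n-k-1)!}{n!}\bigl((n-k)+(k+1)-1\bigr)
=\frac{1}{\binom{n-1}{k}}
\]
checks out, and the final step using $\binom{n}{\lfloor n/2\rfloor}=\binom{n-1}{\lfloor n/2\rfloor}+\binom{n-1}{\lfloor n/2\rfloor-1}\le 2M$ is exactly what is needed.
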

Note that if $n$ is even, $2 \bc{n-1}{\floor{\frac{n-1}{2}}} = \bc{n}{\floor{n/2}}$, but if $n$ is odd, $2 \bc{n-1}{\floor{\frac{n-1}{2}}} > \bc{n}{\floor{n/2}}$. Hence for odd $n$, the situation in the linear lattices is qualitatively different than that of the boolean lattices.

For butterflies, our Theorem C mirrors its Boolean lattice counterpart. We denote all subsets of size $k$ in $[n]$ by $\bc{[n]}{k}$.

\begin{thm}[De Bonis, Katona, Swanepoel,  2005 \cite{DeBonisKatonaSwa:05}] \label{thm:nobutterflyboolean}
For all $n \geq 3$,
$$\mathrm{ex}\left(\bool{n}; \bfly \right)= \bc{n}{\floor{\frac{n}{2}}} + \bc{n}{\floor{\frac{n}{2}}+1}.$$
Moreover, if $\Ff$ is a subposet of $\bool{n}$ of size $\bc{n}{\floor{\frac{n}{2}}} + \bc{n}{\floor{\frac{n}{2}}+1}$ that is butterfly-free, then
\begin{enumerate}
\item\ if $n$ is odd, then $\Ff = \bc{[n]}{\floor{n/2}} \cup \bc{[n]}{\ceil{n/2}}$, and
\item\ if $n$ is even, and $n \geq 6$, then either $\Ff = \bc{[n]}{n/2} \cup \bc{[n]}{n/2+1}$  or $\Ff = \bc{[n]}{n/2} \cup \bc{[n]}{n/2-1}$, and
\item\ if $n=4$, then either $\Ff = \bc{[4]}{2} \cup \bc{[4]}{3}$ or $\Ff = \bc{[4]}{2} \cup \bc{[4]}{1}$, or $\Ff$ is isomorphic to the following family:
$$\bc{[4]}{2} \cup \{\{1\}, \{2,3,4\}, \{2\}, \{1,3,4\}\}.$$
\end{enumerate}
\end{thm}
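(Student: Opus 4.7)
The plan is to establish the upper bound by double counting over maximal chains, extract the extremal characterization from the equality case, and handle the anomalous $n = 4$ case by direct inspection. The lower bound is free: take $\Ff = \bc{[n]}{\floor{n/2}} \cup \bc{[n]}{\floor{n/2}+1}$. If $a,b,c,d \in \Ff$ are distinct with $a,b \subsetneq c,d$, then necessarily $|a|=|b|=\floor{n/2}$ and $|c|=|d|=\floor{n/2}+1$, so $a,b \subseteq c \cap d$ and $|c\cap d|\le \floor{n/2}$ forces $a=b=c\cap d$, a contradiction. In particular, any butterfly in $\bool{n}$ spans at least three distinct size levels.

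The heart of the argument is the matching upper bound. A crucial structural fact is that if $A_1 \subsetneq A_2 \subsetneq A_3$ all lie in a butterfly-free $\Ff$, then $A_3$ is the unique element of $\Ff$ strictly containing $A_2$ (otherwise $(A_1,A_2,A_3,X)$ forms a butterfly for any other $X \supsetneq A_2$ in $\Ff$), and dually $A_1$ is the unique element strictly below $A_2$. In particular $\Ff$ contains no chain of length $4$, and every ``sandwiched'' element of $\Ff$ has a unique $\Ff$-successor and unique $\Ff$-predecessor, both necessarily extremal in $\Ff$. I would then form the double sum
\[
\sum_{A \in \Ff} |A|!\,(n-|A|)! \;=\; \sum_{C\text{ maximal}} |\Ff \cap C|,
\]
where $C$ ranges over maximal chains of $\bool{n}$. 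The naive bound $|\Ff \cap C|\le 3$ recovers only the Erd\H{o}s three-chain estimate, which is too weak. The refinement is to charge each chain $C$ with $|\Ff \cap C|=3$ against the uniqueness-of-successor property above: a chain witnessing $A_1\subsetneq A_2\subsetneq A_3$ in $\Ff$ forbids any other $X \supsetneq A_2$ in $\Ff$, producing a matched deficit among chains through $A_2$ that can be quantitatively tracked. Combining the deficits with a weighted LYM argument yields $|\Ff| \le \bc{n}{\floor{n/2}} + \bc{n}{\floor{n/2}+1}$. This tightening of the effective per-chain bound from $3$ to $2$ is the technical heart of the proof and the main obstacle.

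For the characterization when $n \ge 5$, equality forces the deficit to vanish, so every element of $\Ff$ lies at a level $k$ where $\bc{n}{k}$ is one of the two largest binomial coefficients. For odd $n$ these levels are $\floor{n/2}$ and $\ceil{n/2}$, both of equal size, and filling to the stated cardinality forces $\Ff = \bc{[n]}{\floor{n/2}} \cup \bc{[n]}{\ceil{n/2}}$. For even $n \ge 6$ the level $n/2$ is uniquely largest, so $\Ff \supseteq \bc{[n]}{n/2}$; the remaining $\bc{n}{n/2+1}=\bc{n}{n/2-1}$ elements form an antichain adjacent to $\bc{[n]}{n/2}$, and the butterfly-free condition (combined with the observation that mixing elements at level $n/2-1$ with elements at level $n/2+1$, together with the fully populated middle level, creates a butterfly once $n\ge 6$) forces them to lie entirely in $\bc{[n]}{n/2-1}$ or entirely in $\bc{[n]}{n/2+1}$.

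The $n=4$ case is anomalous and must be handled by a finite case analysis. I would first verify directly that the exceptional family $\bc{[4]}{2} \cup \{\{1\},\{2,3,4\},\{2\},\{1,3,4\}\}$ is butterfly-free by checking that every pair of its elements has at most one common strict upper bound in the family, a finite inspection. Then, by an exhaustive analysis of which size-$1$ and size-$3$ subsets can be added to a (possibly proper) subset of $\bc{[4]}{2}$ without creating a butterfly while attaining total size $10$, one shows that every extremal $\Ff$ is, up to a permutation of $[4]$, one of the three families listed in the theorem. The case analysis here, while elementary, is the only real labor in this final step.
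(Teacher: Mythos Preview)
The paper does not prove this theorem: it is quoted as a known result of De~Bonis, Katona, and Swanepoel (2005), cited only as background and motivation for the paper's own Theorem~C on linear lattices. There is therefore no proof in the paper to compare your proposal against.

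That said, the paper's proof of the linear-lattice analog (Theorem~C, via Lemma~\ref{lem:twoantichains} and Proposition~\ref{prop:LYMtype}) explicitly follows the De~Bonis--Katona--Swanepoel method, so one can compare your sketch to that. Your structural observation---that a sandwiched element $A_2$ in a butterfly-free family has a unique strict $\Ff$-successor and a unique strict $\Ff$-predecessor---is correct and is indeed the key combinatorial input. But your conversion of this into the bound is only gestured at: the phrases ``producing a matched deficit among chains through $A_2$ that can be quantitatively tracked'' and ``a weighted LYM argument'' are placeholders, not a proof. The actual argument partitions $\Ff$ into $\Mm_1$ (maximal elements), $\Mm_2$ (minimal elements of $\Ff \setminus \Mm_1$), and $\Aa$ (the rest); observes that $\Aa$ is an antichain and each $A \in \Aa$ has a unique element of $\Mm_i$ on the appropriate side; and then proves, by inclusion--exclusion on maximal chains, the concrete inequality
\[
\sum_{M\in\Mm_i}\binom{n}{|M|}^{-1} + \frac{1}{2}\sum_{A\in\Aa}\binom{n}{|A|}^{-1}\le 1 \qquad (i=1,2).
\]
Adding the two gives $\sum_{V\in\Ff}\binom{n}{|V|}^{-1}\le 2$, from which both the size bound and (via the equality case) the characterization follow. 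Your sketch has the right ingredients and the right overall shape, but it stops short of formulating and proving this inequality, which is where the actual work lies.
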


For the boolean lattices, many other forbidden configurations have been studied. We refer the reader to Katona \cite{Katona:08}, Griggs et al. \cite{GriggsLiLu:12}, Gr\'osz et al. \cite{GroszMetTomp:18}, and Nagy \cite{Nagy:18} for references and surveys.  For the linear lattices, other than the present study, Sarkis et al. \cite{PCURC-S11:14} have considered ``diamond''-free collections of subspaces in $\Lnq{n}{q}$.

\section{Preliminaries on Normalized Matching Posets and Linear Lattices}
A totally ordered subset of a poset is called a \emph{chain}, and a poset $P$ is \emph{graded} if all maximal chains are of equal length. For a graded poset $P$, the \emph{rank} of an element $x \in P$ is the length of a maximal chain from a minimal element of $P$ to $x$, and the rank of $P$ is the length of a maximal chain in $P$. In a graded poset $P$, for a non-negative integer $i$, the set of all elements of rank $i$ is called the $i$th \emph{level} of $P$ and denoted by $\bc{P}{i}$.  The boolean lattice $\bool{n}$ and the linear lattice $\Lnq{n}{q}$ are both graded posets of rank $n$. In $\bool{n}$ and $\Lnq{n}{q}$, the rank of an element $x$ is the size of $x$ and the dimension of $x$ respectively. Hence, for $x \in \Lnq{n}{q}$, to say $\dim(x) = i$, $x$ is of rank $i$ in $\Lnq{n}{q}$, or $x$ is on the $i$th level of $\Lnq{n}{q}$ are equivalent. 

For a graded poset $P$, the sizes of the different levels of $P$ are called the \emph{rank numbers} of $P$. The rank numbers of $\Lnq{n}{q}$ are given by the Gaussian coefficients $\gbin{n}{0}_q$,  $\gbin{n}{1}_q$, $\ldots$,  $\gbin{n}{n}_q$. This sequence is symmetric (that is, $\gbin{n}{k}_q = \gbin{n}{n-k}_q$) and unimodal with $\gbin{n}{\floor{n/2}}_q = \gbin{n}{\ceil{n/2}}_q$ being the largest rank number(s).

If $P$ is a poset, then its \emph{Hasse diagram} is a graph whose vertices are elements of $P$, and $x$ is adjacent to $y$ if $x < y$ and there exists no $z \in P$ with $x < z < y$. If $P$ is a graded poset of rank $n$, $0< i \leq n$, and $\Aa \subseteq \bc{P}{i}$, then the \emph{shadow} of $\Aa$, denoted by $\shadow \Aa$, is the set of elements of $\bc{P}{i-1}$ that are related to some element of $\Aa$.  Likewise, if $0 \leq i < n$, and $\Aa \subseteq \bc{P}{i}$, then the \emph{shade} of $\Aa$, denoted by $\shade \Aa$, is the set of elements of $\bc{P}{i+1}$ that are related to some element of $\Aa$. If $\Aa= \{a\}$ is a singleton set, we will also use the notation $\shadow a$ and $\shade a$ for $\shadow \{a\}$ and $\shade \{a\}$. Thinking of the Hasse diagram as a graph, we sometimes use graph-theoretic terminology and refer to $\shadow \Aa$ and $\shade \Aa$ as the \emph{neighbors} of $\Aa$ in $\bc{P}{i-1}$ and $\bc{P}{i+1}$ respectively.

If we fix a basis for $(\F_q)^n$, for every subspace $W$ of $(\F_q)^n$ of dimension $k$, there is a unique $k \times n$ matrix $M_W$ of rank $k$ and in reduced row echelon form such that $W$ is the row space of $M_W$.  Define a map $\theta \colon W \mapsto\ \mathrm{nullspace}(M_W)$, then $\theta$ is an order reversing bijection of $\Lnq{n}{q}$.  As a result, $\Lnq{n}{q}$ is ``symmetric'' around the middle rank, and turning the Hasse diagram of the poset ``upside-down'' results in the same Hasse diagram.

A class of posets that includes both the boolean lattices and the linear lattices (as well as the poset of positive integer divisors of a positive integer ordered by divisibility) is that of normalized matching posets.

\begin{defn}[Graham and Harper \cite{GrahamHar:69}]
Let $P$ be a graded poset of rank $n$. Assume that for every integer $i$  with $0< i \leq n$, and for every $\Aa \subseteq \bc{P}{i}$, we have  $$\frac{\size{\shadow \Aa}}{\size{\bc{P}{i-1}}} \geq \frac{\size{\Aa}}{\size{\bc{P}{i}}}.$$
We then say that $P$ is a \emph{normalized matching} poset.
\end{defn}

It is straightforward to show that in a normalized matching poset, for every $0\leq i < n$, and for every $\Aa \subseteq \bc{P}{i}$, we also have  $\frac{\size{\shade \Aa}}{\size{\bc{P}{i+1}}} \geq \frac{\size{\Aa}}{\size{\bc{P}{i}}}$. Having the normalized matching property is also equivalent to having the LYM property (Kleitman \cite{Kleitman:74}). A graded poset $P$ of rank $n$ has the \emph{LYM property} if for all anti-chains $\Aa \subseteq P$, we have $$\sum_{i = 0}^n \dfrac{\size{\Aa \cap \bc{P}{i}}}{\size{\bc{P}{i}}} \leq 1.$$ 

\begin{defn}
Let $P$ be a graded poset of rank $n$. Assume that for every $0 < i \leq n$, and for every $x, y \in \binom{P}{i}$, we have $\size{\shadow x} =  \size{\shadow y}$, and for every $0 \leq j < n$, and for every $z, w \in \binom{P}{j}$, we have $\size{\shade z} =  \size{\shade w}$. We say that $P$ is a \emph{regular} poset.
\end{defn}

\begin{lem}[Baker \cite{Baker:69}] Every regular poset is normalized matching. In particular, for $n$ a positive integer, and $q$ a prime power, the poset $\Lnq{n}{q}$ (as well as $\bool{n}$) is a normalized matching poset.
\end{lem}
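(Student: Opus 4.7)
The plan is to establish the normalized matching property by a double-counting argument on covering pairs between consecutive levels, and then to verify regularity of $\Lnq{n}{q}$ and $\bool{n}$ by a direct computation.

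First, I would let $P$ be regular of rank $n$ and fix $0 < i \leq n$. Regularity yields two constants: write $d^-_i$ for the common value of $\size{\shadow x}$ as $x$ ranges over $\bc{P}{i}$, and $d^+_{i-1}$ for the common value of $\size{\shade y}$ as $y$ ranges over $\bc{P}{i-1}$. Since $P$ is graded, any pair $(y,x)$ with $y \in \bc{P}{i-1}$, $x \in \bc{P}{i}$, and $y < x$ is automatically a covering relation, so counting all such pairs in two ways gives
$$\size{\bc{P}{i}} \cdot d^-_i \;=\; \size{\bc{P}{i-1}} \cdot d^+_{i-1}.$$

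Next, fix an arbitrary $\Aa \subseteq \bc{P}{i}$ and count covering pairs between $\Aa$ and $\shadow \Aa$ in the same manner. From above there are exactly $\size{\Aa} \cdot d^-_i$ such pairs, while from below there are at most $\size{\shadow \Aa} \cdot d^+_{i-1}$ (each element of $\shadow \Aa$ has $d^+_{i-1}$ upper neighbors in $\bc{P}{i}$, not all of which need lie in $\Aa$). Dividing by $\size{\bc{P}{i-1}} \cdot d^+_{i-1}$ and applying the identity above,
$$\frac{\size{\shadow \Aa}}{\size{\bc{P}{i-1}}} \;\geq\; \frac{\size{\Aa} \cdot d^-_i}{\size{\bc{P}{i-1}} \cdot d^+_{i-1}} \;=\; \frac{\size{\Aa}}{\size{\bc{P}{i}}},$$
which is the normalized matching inequality.

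To finish, I would verify regularity of the two examples. In $\Lnq{n}{q}$, a $k$-dimensional subspace $W$ has $\gbin{k}{k-1}_q = [k]_q$ subspaces of dimension $k-1$, and the $(k+1)$-dimensional subspaces of $(\F_q)^n$ containing $W$ are in bijection with the $1$-dimensional subspaces of the quotient $(\F_q)^n/W$, numbering $[n-k]_q$. Both counts depend only on $\dim W$, so $\Lnq{n}{q}$ is regular; the boolean case is the familiar observation that a $k$-subset of $[n]$ has exactly $k$ subsets of size $k-1$ and $n-k$ supersets of size $k+1$. This lemma presents no substantive obstacle — the only small point to track carefully is that the second count is genuinely an inequality (since $\shadow \Aa$ may have upper neighbors outside $\Aa$) while the first is an exact equality because it ranges over all covering pairs between the two levels.
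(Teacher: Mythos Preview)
Your argument is correct and is precisely the standard double-counting proof of Baker's lemma. The paper itself does not supply a proof of this statement --- it is simply quoted with a citation to Baker~\cite{Baker:69} --- so there is nothing to compare against, but what you have written is exactly the expected argument and would serve perfectly well as a proof here.
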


\section{$u$-brooms and $v$-forks}

 We first show that  a maximum-sized family of subspaces that avoids a $u$-broom and a $v$-fork can be ``pushed'' into the middle levels of the linear lattice.  If $P$ is a graded poset and $Q$ is a subposet of $P$, then we consider replacing the elements of highest rank in $Q$ with their shadow in $P$.  
 We start with a straightforward observation.

 \begin{lem}\label{lem:ToShadowBroomFree}
 Let $u$, $i$, and $n$ be positive integers with $i \leq n$, and let $P$ be a graded poset of rank $n$.  Let $Q$ be a subposet of $P$ with no elements of rank greater than $i$, and let $\Bb = \bc{P}{i} \cap Q$ denote the elements of rank $i$ in $Q$. Let $Q^\prime = \left(Q - \Bb \right)  \cup \shadow\Bb$ be the poset obtained by replacing elements of $\Bb$ with their shadow in $P$. Then $Q^\prime$ has no element of rank greater than $i-1$, and if $Q$ is $u$-broom free then so is $Q^\prime$.
 \end{lem}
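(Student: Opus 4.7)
The rank claim is immediate: $Q - \Bb$ contains no element of rank greater than $i-1$ (since the only rank-$i$ elements of $Q$ were in $\Bb$ and $Q$ had no elements of higher rank), and by definition $\shadow\Bb \subseteq \bc{P}{i-1}$. So I would dispose of the rank assertion in one sentence and focus on the $u$-broom-freeness.

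For the broom-freeness, I would argue the contrapositive: assume $Q'$ contains a $u$-broom with handle $a_0$ and leaves $a_1, \ldots, a_u$, and produce a $u$-broom in $Q$. Since $P$ is graded and each $a_j <_P a_0$, the handle has strictly higher rank than every leaf. Each element of $Q'$ lies in exactly one of $Q - \Bb$ or $\shadow\Bb$, so the natural plan is to split into cases according to where $a_0$ sits.

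In the easy case $a_0 \in Q - \Bb$, the handle has rank at most $i-1$, so every leaf has rank at most $i-2$; in particular no leaf can lie in $\shadow\Bb \subseteq \bc{P}{i-1}$, so every $a_j$ already lies in $Q - \Bb \subseteq Q$ and the broom sits in $Q$ as is. The interesting case is $a_0 \in \shadow\Bb$, where I must replace $a_0$ with a witness from $\Bb$: choose $b_0 \in \Bb$ with $a_0 <_P b_0$. Again the leaves have rank at most $i-2$, so they all lie in $Q - \Bb$, and transitivity gives $a_j <_P a_0 <_P b_0$, producing a $u$-broom $\{b_0, a_1, \ldots, a_u\}$ in $Q$ with handle $b_0$ (distinctness of the $u+1$ elements is automatic from the rank differences).

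The only place any subtlety arises is this Case~2 substitution, and even it is essentially bookkeeping: one needs the leaves not to accidentally be among the shadow elements, which is why the rank gap between $a_0$ and the $a_j$ is crucial. I do not foresee a real obstacle here, which is consistent with the lemma being labelled a ``straightforward observation'' in the text.
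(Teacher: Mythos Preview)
Your proof is correct and takes essentially the same approach as the paper's: argue by contradiction, observe that the handle of any new $u$-broom must lie in $\shadow\Bb$, and then lift the handle to a witness in $\Bb$. Your version is more explicit about the rank bookkeeping that forces the leaves into $Q-\Bb$, which the paper leaves implicit. One small slip: $(Q-\Bb)$ and $\shadow\Bb$ need not be disjoint (a rank-$(i-1)$ element already in $Q$ may also lie in $\shadow\Bb$), so ``exactly one'' is inaccurate; but your two cases still jointly cover $Q'$ and each argument stands on its own hypothesis, so nothing breaks.
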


\begin{proof}
It is clear that $Q^\prime$ has no element of rank greater than $i-1$. Now assume that $Q$ is $u$-broom free but $Q^\prime$ contains a $u$-broom. Since $Q$ did not contain a $u$-broom, the handle of a newly created $u$-broom in $Q^\prime$ would have to be in $\shadow \Bb$, and the rest of the elements of this $u$-broom would be in $Q$. But if the handle is in $\shadow \Bb$ then it is comparable to an element of $\Bb$. Replacing the handle with this element of $\Bb$ gives a $u$-broom in $Q$. The contradiction completes the proof. 
\end{proof}

Specializing to normalized matching posets, we can quantify our observation.

\begin{prop}\label{prop:NMbroomfree}
 Let $u$, $i$, and $n$ be positive integers with $i \leq n$, and let $P$ be a normalized matching poset of rank $n$ with rank numbers $r_0$, $r_1$, $\ldots$, $r_n$.  Let $Q$ be a subposet of $P$ containing no elements of $P$ of rank greater than $i$. Let $\Bb = \bc{P}{i} \cap Q$  and $Q^\prime =  \left(Q - \Bb \right)  \cup \shadow\Bb$. Assume that $Q$ is $u$-broom free and that  $\frac{r_{i-1}}{r_i} = u + \alpha$ for some non-negative real number $\alpha$, then
 $$\size{Q^\prime} \geq \size{Q} + \alpha\size{\Bb}.$$
In particular, $\size{Q^\prime} \geq \size{Q}$, and if $\alpha > 0$, then $\size{Q^\prime} > \size{Q}$.
\end{prop}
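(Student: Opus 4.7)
The plan is to estimate $|Q'|$ directly in terms of $|Q|$ by tracking what is removed and what is added. Writing $T = Q \cap \bc{P}{i-1}$ for the rank-$(i-1)$ part of $Q$, and noting that $\shadow \Bb$ lies entirely at rank $i-1$, the modification $Q \mapsto Q'$ removes the $|\Bb|$ elements at rank $i$ and adjoins exactly those elements of $\shadow \Bb$ not already in $T$, giving
$$|Q'| = |Q| - |\Bb| + |\shadow \Bb \setminus T|.$$
The proposition thus reduces to proving the inequality $|\shadow \Bb \setminus T| \geq (1 + \alpha)|\Bb|$.

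For a lower bound on $|\shadow \Bb|$, I would invoke the normalized matching hypothesis directly, obtaining $|\shadow \Bb| \geq (r_{i-1}/r_i)|\Bb| = (u + \alpha)|\Bb|$. The main step, and the only place the broom hypothesis enters, will be an upper bound on $|T \cap \shadow \Bb|$. Here the key observation is that for every $b \in \Bb$, at most $u-1$ elements of $T$ can lie below $b$: otherwise $b$ together with $u$ such elements would be a $u$-broom inside $Q$ (with $b$ as handle), since all the elements involved are in $Q$, contradicting the hypothesis. Double-counting incidences in $\{(p,b) \in T \times \Bb : p < b\}$ then gives
$$|T \cap \shadow \Bb| \leq \sum_{b \in \Bb} |T \cap \shadow b| \leq (u - 1)|\Bb|,$$
because each $p \in T \cap \shadow \Bb$ is counted at least once on the left.

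Combining the two bounds,
$$|\shadow \Bb \setminus T| = |\shadow \Bb| - |T \cap \shadow \Bb| \geq (u + \alpha)|\Bb| - (u - 1)|\Bb| = (1 + \alpha)|\Bb|,$$
and substituting into the identity above yields $|Q'| \geq |Q| + \alpha|\Bb|$, with strict inequality whenever $\alpha > 0$ and $\Bb \neq \emptyset$. The only conceptually delicate point is the translation of the global ``no $u$-broom'' condition into the per-element degree bound $|T \cap \shadow b| \leq u - 1$; once that is in hand, a single application of normalized matching and one double count finish the argument.
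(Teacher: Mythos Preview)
Your proof is correct and follows essentially the same route as the paper's: both bound $|\shadow\Bb|$ from below via normalized matching, bound $|\shadow\Bb \cap Q|$ from above by $(u-1)|\Bb|$ using the $u$-broom-free hypothesis, and combine these with the identity $|Q'| = |Q| - |\Bb| + |\shadow\Bb \setminus Q|$. Your introduction of $T = Q \cap \bc{P}{i-1}$ is harmless since $\shadow\Bb \cap Q = \shadow\Bb \cap T$, and your observation that strict inequality requires $\Bb \neq \emptyset$ is a correct refinement of the proposition's final clause.
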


\begin{proof}
Since $Q$ does not contain a $u$-broom, an element of $\Bb$ can be comparable to at most $u-1$ elements of $Q$.  Hence, $\size{\shadow \Bb \cap Q} \leq (u-1)\size{\Bb}$. In addition, since $P$ is a normalized matching poset, we have
$$ \size{\shadow \Bb} \geq \frac{r_{i-1}}{r_i} \size{\Bb} = (u+\alpha) \size{\Bb}.$$
As a result, 
$$\size{\shadow \Bb - Q} = \size{\shadow \Bb} - \size{\shadow \Bb \cap Q} \geq (u+\alpha) \size{\Bb} -(u-1)\size{\Bb} = (\alpha+1)\size{\Bb}.$$
We conclude that
$$\size{Q^\prime} = \size{Q} - \size{\Bb} + \size{\shadow \Bb - Q} \geq \size{Q} + \alpha\size{\Bb}.$$
\end{proof}

In the situation of Proposition \ref{prop:NMbroomfree}, let $a \leq n$ be a positive integer such that $\frac{r_{i-1}}{r_i} > u$ for all $a \leq i \leq n$. If $Q$ is an arbitrary subposet of $P$ that does not contain a $u$-broom, then by repeated replacement of the highest ranked elements with their shadow, we get a larger $u$-broom free subposet whose elements have rank at most $a-1$. We can thus conclude that the elements of a maximum-sized $u$-broom free subposet are restricted to ranks $0$ through $a-1$. However, if $Q$ is both $u$-broom free and $v$-fork free, then replacing the highest rank elements of $Q$ with their shadow (as done in Lemma \ref{lem:ToShadowBroomFree}) may create a $v$-fork. To also avoid $v$-forks, we need to match and replace the highest ranked elements of $Q$ with an equal number of elements of $\shadow \Bb - Q$. This will result in a poset that continues not to contain $u$-brooms and $v$-forks, and will have the same size as our original poset.

\begin{prop}\label{prop:Pushdown}
 Let $u$, $v$, $i$, and $n$ be positive integers with $i \leq n$, and $P$ be a normalized matching poset of rank $n$ with rank numbers $r_0$, $r_1$, $\ldots$, $r_n$.  Let $Q$ be a subposet of $P$ containing no elements of $P$ of rank greater than $i$.  Assume that $Q$ does not contain $u$-brooms or $v$-forks and that  $\frac{r_{i-1}}{r_i} \geq u$, then we can appropriately replace all the elements of $\Bb =\bc{P}{i} \cap Q$ with an equal number of elements in $\shadow \Bb - Q$ resulting in a poset $Q^\prime$ that has as many elements as $Q$, has no elements of $P$ of rank greater than or equal to $i$, and contains no $u$-brooms or $v$-forks. 
 \end{prop}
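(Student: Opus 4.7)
The plan is to construct the required replacement $\Ss \subseteq \shadow \Bb - Q$ via a Hall marriage argument, and to exploit the descent structure of the matching itself to rule out any new $v$-forks.

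I would form the bipartite graph $G$ whose parts are $\Bb$ and $T := \shadow \Bb - Q$, with an edge between $b \in \Bb$ and $s \in T$ exactly when $s <_P b$, and aim to extract an injection $\phi \colon \Bb \hookrightarrow T$ respecting this relation. To verify Hall's condition, I would fix $\Bb_0 \subseteq \Bb$ and apply Proposition \ref{prop:NMbroomfree} to the sub-subposet $Q_0 := (Q - \Bb) \cup \Bb_0$, which remains $u$-broom free as a subset of $Q$ and whose rank-$i$ part is exactly $\Bb_0$. The proposition yields $|\shadow \Bb_0 - Q_0| \geq |\Bb_0|$; since $\shadow \Bb_0 \cap \Bb = \emptyset$ for rank reasons, this is the same as $|\shadow \Bb_0 - Q| = |N_G(\Bb_0)| \geq |\Bb_0|$, and Hall's theorem produces $\phi$.

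Setting $\Ss := \phi(\Bb)$ and $Q' := (Q - \Bb) \cup \Ss$, the easy properties are automatic: $|Q'| = |Q|$ because $\Ss \cap Q = \emptyset$ and $|\Ss| = |\Bb|$; every element of $Q'$ has rank at most $i-1$; and $Q' \subseteq (Q - \Bb) \cup \shadow \Bb$, so $u$-broom freeness is inherited from Lemma \ref{lem:ToShadowBroomFree}.

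The main obstacle is $v$-fork freeness, and here I expect the descent condition $\phi(b) <_P b$ to be decisive. Since $Q - \Bb \subseteq Q$ is $v$-fork free, any fork in $Q'$ must contain some $s = \phi(b) \in \Ss$; as $s$ has rank $i-1$ and $Q'$ has no higher-ranked elements, $s$ cannot be the handle, so it is a leaf with handle $h \in Q - \Bb$ of rank strictly less than $i-1$. Splitting the leaves into $L_1 \subseteq Q - \Bb$ and $L_2 \subseteq \Ss$, each $s' = \phi(b') \in L_2$ forces $b' >_P \phi(b') >_P h$, so $b'$ lies in $\Bb_h := \{b \in \Bb : b >_P h\}$; injectivity of $\phi$ then gives $|L_2| \leq |\Bb_h|$. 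Combined with $L_1 \subseteq \{q \in Q - \Bb : q >_P h\}$, this yields
\[ v \leq |L_1| + |L_2| \leq |\{x \in Q : x >_P h\}| \leq v - 1, \]
the final inequality coming from $v$-fork freeness of $Q$ at $h$. This contradiction closes the argument, and shows in particular that an arbitrary size-$|\Bb|$ subset of $T$ would not suffice---it is crucial that $\phi$ matches each $b$ to an element strictly below it.
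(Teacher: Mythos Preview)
Your proof is correct and follows essentially the same approach as the paper: verify Hall's condition for a matching from $\Bb$ into $\shadow\Bb - Q$ by applying Proposition~\ref{prop:NMbroomfree} to $(Q-\Bb)\cup \Bb_0$ for each $\Bb_0\subseteq\Bb$, invoke Lemma~\ref{lem:ToShadowBroomFree} for $u$-broom freeness, and lift any putative $v$-fork in $Q'$ back to $Q$ via the matching. The paper compresses your $v$-fork analysis into the single sentence ``replacing these elements with their matches in $\Bb$ gives a $v$-fork in $Q$''; your explicit partition of the leaves into $L_1$ and $L_2$ and the counting $|L_1|+|L_2|\leq |\{x\in Q: x>_P h\}|$ is just an unpacking of that same replacement step.
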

 
\begin{proof}
We verify the marriage condition to show that we can  match elements of $\Bb$ with an equal number of elements of $\shadow \Bb - Q$, in such a way that every element $b$ of $\Bb$ is matched with a distinct element of $\shadow \Bb-Q$ that is covered by $b$. Replacing every element of $\Bb$ with its match in $\shadow \Bb - Q$, we get the desired subposet $Q^\prime$ of $P$.  By Lemma \ref{lem:ToShadowBroomFree}, $Q^\prime$ will not contain a $u$-broom. If $Q^\prime$ had a new $v$-fork, then some of the elements of the $v$-fork would have to be in $\shadow \Bb - Q$. Replacing these elements with their matches in $\Bb$ gives a $v$-fork in $Q$. The contradiction shows that $Q^\prime$ is also $v$-fork free.

To verify the marriage condition let $X \subseteq \Bb$, and consider the poset $(Q-\Bb) \cup X$, then by Proposition \ref{prop:NMbroomfree}, in this poset, if we replace the elements of $X$ by their shadow, we get a new poset of at least the same size. Hence $X$ has at least $\size{X}$ many neighbors in $\shadow \Bb - Q$. This verifies the marriage condition and shows that we can match the elements of $\Bb$ with those of $\shadow \Bb- Q$ as desired. The proof is now complete.
\end{proof}

With an identical proof, we can get dual versions of Lemma \ref{lem:ToShadowBroomFree} and Proposition \ref{prop:NMbroomfree} for $v$-fork free families. Let $n$, $a$, and $v$ be positive integers with $a < n$, such that $\frac{r_{i+1}}{r_i} > v$ for all $0 \leq i \leq a$. If $Q$ is an arbitrary subposet of $P$ that does not contain a $v$-fork, then by repeated replacement of the lowest ranked elements with their shade, we get a larger $v$-fork free subposet whose elements have rank at least $a+1$. We thus conclude that, under such circumnstances, the elements of a maximum-sized $v$-fork free subposet are restricted to ranks $a+1$ through $n$. Here, we record the dual version of Proposition \ref{prop:Pushdown}.

\begin{prop}\label{prop:Pushup}
 Let $u$, $v$, and $n$ be positive integers, $i$ a non-negative integer with $i < n$, and let $P$ be a normalized matching poset of rank $n$ with rank numbers $r_0$, $r_1$, $\ldots$, $r_n$.  Let $Q$ be a subposet of $P$ containing no elements of $P$ of rank less than $i$.  Assume that $Q$ does not contain $u$-brooms or $v$-forks and that  $\frac{r_{i+1}}{r_i} \geq v$, then we can appropriately replace all the elements of $\Bb = \bc{P}{i} \cap Q$ with an equal number of elements in $\shade \Bb - Q$ resulting in a poset $Q^\prime$ that has as many elements as $Q$, has no elements of $P$ of rank less than or equal to $i$, and contains no $u$-brooms or $v$-forks. 
 \end{prop}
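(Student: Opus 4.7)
The plan is to imitate the proof of Proposition \ref{prop:Pushdown} with all top/bottom roles reversed, exploiting the fact (noted in the text) that a normalized matching poset also satisfies the up-LYM inequality $\size{\shade \Aa}/r_{i+1} \geq \size{\Aa}/r_i$. First I would record two dual tools with essentially identical proofs: (i) the dual of Lemma \ref{lem:ToShadowBroomFree} --- if a $v$-fork free $Q$ sits in ranks $\geq i$, then replacing $\Bb = \bc{P}{i} \cap Q$ by $\shade \Bb$ yields a $v$-fork free poset (any newly added element lies at the new bottom rank $i+1$, so in any new $v$-fork it can only play the role of the handle, whose lift to a witness $b \in \Bb$ produces a $v$-fork in $Q$); and (ii) the dual of Proposition \ref{prop:NMbroomfree} --- for the same setup with $r_{i+1}/r_i = v + \alpha$, the $v$-fork freeness gives $\size{\shade \Bb \cap Q} \leq (v-1)\size{\Bb}$, which combined with the up-LYM inequality yields $\size{\shade \Bb - Q} \geq (\alpha + 1)\size{\Bb}$.

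Next I would verify the marriage condition between $\Bb$ and $\shade \Bb - Q$, where $b \in \Bb$ is allowed to be matched to any element of $\shade \Bb - Q$ that covers $b$. For $X \subseteq \Bb$, the subposet $(Q - \Bb) \cup X$ remains $v$-fork free, so applying tool (ii) to it shows that the neighborhood of $X$ in $\shade \Bb - Q$ has size at least $\size{X}$. Hall's theorem then yields an injection $b \mapsto b'$ with $b' \in \shade \Bb - Q$ and $b' > b$, and I would define $Q'$ by replacing each $b \in \Bb$ by its match $b'$. Clearly $\size{Q'} = \size{Q}$ and $Q'$ lives in ranks $\geq i+1$.

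It remains to verify that $Q'$ avoids both $v$-forks and $u$-brooms. The $v$-fork case is immediate from tool (i). For the $u$-broom case, suppose a new $u$-broom appears in $Q'$: its feet all sit at rank $\geq i+1$, and its handle has strictly greater rank, so the handle is at rank $\geq i+2$ and must be an old element of $Q - \Bb$; any newly added element of the broom is therefore a foot $b' \in \shade \Bb - Q$ at rank $i+1$. Replacing each such $b'$ by its matched $b \in \Bb$ --- which satisfies $b < b'$ and hence also lies below the handle --- turns the broom into a $u$-broom entirely inside $Q$, contradicting the hypothesis on $Q$. The main point requiring care is precisely this last step: one must observe that the handle of a $u$-broom in $Q'$ is forced to have rank $\geq i+2$, so that it cannot itself be newly added. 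This is what licenses the reverse-replacement and distinguishes the $u$-broom check from the $v$-fork check that tool (i) handles for free.
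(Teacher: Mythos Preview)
Your proposal is correct and takes essentially the same approach as the paper, which simply remarks that Proposition~\ref{prop:Pushup} is the dual of Proposition~\ref{prop:Pushdown} with an identical proof. You have spelled out that dual argument faithfully, and your extra care in observing that the handle of any new $u$-broom in $Q'$ must lie at rank $\geq i+2$ (hence be an old element) is exactly the point that makes the reverse-replacement work; the paper's proof of Proposition~\ref{prop:Pushdown} handles the analogous $v$-fork check with the same logic but states it more tersely.
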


Propositions \ref{prop:Pushdown} and \ref{prop:Pushup} have an immediate corollary for the linear lattices.

 \begin{cor}\label{cor:Lnqbroomfork1}
 Let $n$ be a positive integer, $q$ a power of a prime, and $u$ and $v$ positive integers.  Then there exists a maximum-sized $u$-broom and $v$-fork free family $\Ff \subseteq \Lnq{n}{q}$ such that for all $F \in \Ff$,
 $$\floor{\frac{n+1-\log_q v}{2}} \leq \dim F \leq \ceil{\frac{n-1+\log_q u}{2}}.$$

 \end{cor}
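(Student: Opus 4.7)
The plan is to reduce to Propositions \ref{prop:Pushdown} and \ref{prop:Pushup} by locating exactly those ranks where the ratio of consecutive Gaussian rank numbers of $\Lnq{n}{q}$ is at least $u$ (for pushdown) or at least $v$ (for pushup). Starting from any maximum-sized $u$-broom and $v$-fork free family $\Ff \subseteq \Lnq{n}{q}$, I would apply the two propositions iteratively, performing pushdowns from the topmost occupied rank of $\Ff$ and pushups from the bottom, each preserving $\size{\Ff}$ and the forbidden-subposet conditions.

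For the pushdown threshold I would compute
$$\frac{r_{i-1}}{r_i}=\frac{\gbin{n}{i-1}_q}{\gbin{n}{i}_q}=\frac{[i]_q}{[n-i+1]_q}=\frac{q^i-1}{q^{n-i+1}-1},$$
and observe that if $i \geq (n+1+\log_q u)/2$ then $q^i \geq u\,q^{n-i+1}$, which, together with $u \geq 1$, yields $r_{i-1}/r_i \geq u$ after a one-line manipulation. Hence Proposition \ref{prop:Pushdown} applies at every integer $i \geq \ceil{(n+1+\log_q u)/2}$, and iterating pushdowns downward from the top rank of $\Ff$ shrinks the rank support into $\{0,1,\ldots,\ceil{(n-1+\log_q u)/2}\}$. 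The dual computation using $r_{i+1}/r_i = (q^{n-i}-1)/(q^{i+1}-1)$ gives $r_{i+1}/r_i \geq v$ whenever $i \leq \floor{(n-1-\log_q v)/2}$, so iterating pushups from the bottom via Proposition \ref{prop:Pushup} pushes the support up to ranks at least $\floor{(n+1-\log_q v)/2}$.

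The one delicate point is coordinating the two iterations so that neither undoes the other. Since a pushup from level $i$ creates new elements only at level $i+1$, performing all pushdowns first and all pushups second keeps the upper bound intact: the pushup phase terminates at level $\floor{(n+1-\log_q v)/2}$, which in the relevant regime lies at or below $\ceil{(n-1+\log_q u)/2}$, so the pushup-created elements never exceed the upper bound established in the pushdown phase. Combining both phases delivers a maximum-sized $u$-broom and $v$-fork free family whose members all have dimension in the stated range, completing the proof.
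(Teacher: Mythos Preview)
Your proposal is correct and takes essentially the same approach as the paper: compute the ratio $r_{i-1}/r_i=(q^i-1)/(q^{n-i+1}-1)$, locate the thresholds at which Propositions~\ref{prop:Pushdown} and~\ref{prop:Pushup} apply, and iterate pushdowns followed by pushups. Your derivation of $r_{i-1}/r_i\ge u$ directly from $q^i\ge u\,q^{n-i+1}$ together with $u\ge 1$ is marginally cleaner than the paper's detour through the strict bound $r_{i-1}/r_i>q^{2i-n-1}$, and your explicit remark on coordinating the two phases (doing all pushdowns first so that the subsequent pushups do not overshoot the established upper bound) is something the paper leaves implicit.
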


\begin{proof}
The poset $\Lnq{n}{q}$ is normalized matching and its rank numbers are $r_0 = \gbin{n}{0}_q$, $r_1= \gbin{n}{1}_q$, $\ldots$, $r_n = \gbin{n}{n}_q$. If $n$ is odd and $i = \frac{n+1}{2}$, then $r_{i-1} = r_i$. However, for $i > \frac{n+1}{2}$ (with $n$ even or odd), we have
$$\frac{r_{i-1}}{r_i} = \frac{\gbin{n}{i-1}_q}{\gbin{n}{i}_q} = \frac{[i]_q}{[n-i+1]_q} = \frac{q^i-1}{q^{n-i+1}-1} > q^{2i-n-1},$$
and $q^{2i-n-1} \geq u$ if $i \geq \frac{n+1+\log_q(u)}{2}$. Now if $Q \subseteq \Lnq{n}{q}$ is a maximum-sized $u$-broom and $v$-fork free family, then repeated application of  Proposition \ref{prop:Pushdown}, starting with $i = n$ and ending with $i = \ceil{\frac{n+1+\log_q(u)}{2}}$, gives an equal size $u$-broom and $v$-fork free family $Q^\prime$ confined to rank numbers $0$ through $\ceil{\frac{n+1+\log_q(u)}{2}}-1 = \ceil{\frac{n-1+\log_q(u)}{2}}$. 

Similarly, for $i < \frac{n-1}{2}$, we have
$$\frac{r_{i+1}}{r_i} = \frac{\gbin{n}{i+1}_q}{\gbin{n}{i}_q} = \frac{q^{n-i}-1}{q^{i+1}-1} > q^{n-2i-1},$$
and $q^{n - 2i - 1} \geq v$ if $i \leq \frac{n-1-\log_q(v)}{2}$. Apply Proposition \ref{prop:Pushup} repeatedly to $Q^\prime$, starting with $i = 0$ and ending with $i = \floor{\frac{n-1-\log_q(v)}{2}}$, to get $\Ff$, a maximum-sized $u$-broom and $v$-fork free subposet of $\Lnq{n}{q}$, confined to rank numbers $\floor{\frac{n-1-\log_q(v)}{2}} + 1 = \floor{\frac{n+1-\log_q(v)}{2}}$ through $\ceil{\frac{n-1+\log_q(u)}{2}}$. 
\end{proof}

We can already prove Theorem \hyperlink{thmB}{B}.

\begin{proof}[\textbf{Proof of Theorem \hyperlink{thmB}{B}}] If $u$ and $v$ are less than or equal to $q$, then the restriction on the dimension of the subspaces in the maximum-sized family not containing $u$-brooms and $v$-forks produced in Corollary \ref{cor:Lnqbroomfork1} becomes
$$\floor{\frac{n}{2}} \leq \dim(F) \leq \ceil{\frac{n}{2}}.$$
However, since $n$ is assumed to be even, we get that $\dim(F) = n/2$ for all $F \in \Ff$. We conclude that the collection of all subspaces of dimension $n/2$ is a maximum-sized family not containing $u$-brooms and $v$-forks. 

To show uniqueness, let $Q$ be any $u$-broom and $v$-fork free family of subspaces in $\Lnq{n}{q}$ that contains at least one subspace of dimension other than $n/2$. Without loss of generality, assume that $Q$ contains a subspace of dimension greater than $n/2$. Use Propositions \ref{prop:Pushdown} and \ref{prop:Pushup} (as in the proof of Corollary \ref{cor:Lnqbroomfork1}) to construct a $u$-broom and $v$-fork family $Q^\prime$ with $\size{Q^\prime} = \size{Q}$ and so that $Q^\prime$ is confined to ranks $n/2$ and $n/2 + 1$ (in other words, unlike the proof of Corollary \ref{cor:Lnqbroomfork1}, stop one step short of pushing every element into the middle level). Since $\gbin{n}{n/2}_q/\gbin{n}{n/2+1}_q > q \geq u$, by  Proposition \ref{prop:NMbroomfree} (with $\alpha > 0$ and $i =n/2+1$), if we replace the elements of rank $n/2+1$ with their shadow, we get a family that is larger than $Q$ and confined to just rank $n/2$. This new bigger family will certainly be $u$-broom and $v$-fork free since it consists of elements of just one rank, and this proves that $Q$ could not have been a maximum-sized $u$-broom and $v$-fork free family. The proof is now complete.
\end{proof}

In the case, when $n$ is odd, and, as long as $u, v \leq q^2$, Corollary \ref{cor:Lnqbroomfork1} shows the existence of a maximum-sized $u$-broom and $v$-fork free family of subspaces confined to the middle two levels of $\Lnq{n}{q}$. The uniqueness proof that we gave for even $n$, however, does not directly generalize to this case.

\section{$2$-brooms and $2$-forks}

We now focus on families of $\Lnq{n}{q}$ that are $(\wedge, \vee)$-free.  We begin with the particular case of $n = 3$, which we will need as the base case for the later inductive proof of Theorem \hyperlink{thmA}{A}, and which poses special issues because of the examples given in Figure \ref{fig:FanoExtremes}.

\begin{prop}\label{prop:L3q}
Let $q$ be a power of a prime, and let $\Ff$ be a maximum-sized $(\wedge, \vee)$-free family in $\Lnq{3}{q}$. Then
\ben
\item\ if $q \neq 2$, then $\Ff$ consists of all subspaces of dimension $1$ or all subspaces of dimension $2$, and
\item\ if $q = 2$, then $\Ff$ consists either entirely of subspaces of dimension $1$, or entirely of subspaces of dimension $2$ ,or $\Ff$ is one of the two configurations illustrated in Figure \ref{fig:FanoExtremes}.
\een
In particular, 
$$ \mathrm{ex}(\Lnq{3}{q}; \wedge, \vee) = \gbin{3}{1}_q = q^2 + q + 1.$$
\end{prop}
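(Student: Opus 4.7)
The plan is to first reduce to the middle two levels, then bound $|\Ff|$ via a Cauchy-Schwarz estimate on the $PG(2,q)$ incidence structure, and finally analyze the equality cases, which bifurcate between $q=2$ and $q\geq 3$.

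\textbf{Reduction.} First I would argue that any max-sized family $\Ff$ avoids the trivial subspaces: if $\emptyset \in \Ff$, then any two incomparable $x, y \in \Ff \setminus \{\emptyset\}$ form a $\vee$ with $\emptyset$, forcing $\Ff \setminus \{\emptyset\}$ to be a chain of length $\leq 3$ and hence $|\Ff| \leq 4 < q^2+q+1$; a dual argument excludes $V$. Writing $A = \Ff \cap \bc{\Lnq{3}{q}}{1}$, $B = \Ff \cap \bc{\Lnq{3}{q}}{2}$, $a = |A|$, $b = |B|$, and using the self-duality of $\Lnq{3}{q}$, we may assume $a \leq b$. Let $m$ be the number of incidences $p \subset \ell$ with $p \in A, \ell \in B$; the $(\wedge,\vee)$-free condition is precisely that these incidences form a matching in the point-line incidence graph of $PG(2,q)$, so $m \leq a$.

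\textbf{Cauchy-Schwarz.} For each $\ell \in \bc{\Lnq{3}{q}}{2} \setminus B$, set $x_\ell = |\ell \cap A|$. Two double counts give
$$\sum_{\ell \notin B} x_\ell = (q+1)a - m \quad\text{and}\quad \sum_{\ell \notin B} \binom{x_\ell}{2} = \binom{a}{2},$$
the second using that every pair of $A$-points lies on a unique line, which is outside $B$ by $\wedge$-freeness. The Cauchy-Schwarz inequality applied to the $q^2+q+1-b$ summands yields
$$\bigl((q+1)a - m\bigr)^2 \leq (q^2+q+1-b)\bigl(a^2 + qa - m\bigr).$$
Viewing this as a quadratic in $m$ and enforcing $m \leq a$, a discriminant computation shows that no valid $m$ exists once $a + b > q^2+q+1$, proving $\mathrm{ex}(\Lnq{3}{q}; \wedge, \vee) = q^2+q+1$.

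\textbf{Characterization.} Rerunning the same analysis in the tight case $a + b = q^2+q+1$ with $b > 0$ forces $a \geq q^2-q+1$. Combined with $a \leq (q^2+q+1)/2$ this is impossible for $q \geq 3$, so $\Ff$ must consist of a single level. For $q = 2$ the sole remaining possibility is $(a,b) = (3,4)$: then $m = 3$ is forced and equality in Cauchy-Schwarz demands $x_\ell = 2$ for each of the three lines outside $B$. Hence the three $A$-points pairwise span three distinct lines -- $A$ is a non-collinear triangle and $B$ is the complement of its three side-lines -- which is the left-hand Fano configuration of Figure \ref{fig:FanoExtremes}; the dual case $(a,b) = (4,3)$ yields the right-hand one. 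The main obstacle is the algebraic work of verifying that the Cauchy-Schwarz-forced lower bound $a \geq q^2 - q + 1$ strictly exceeds $(q^2+q+1)/2$ precisely when $q \geq 3$, so that the $q = 2$ case produces the single integer value $a = 3$ and cleanly isolates the two Fano exceptions.
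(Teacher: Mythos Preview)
Your approach is correct and genuinely different from the paper's.

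The paper argues combinatorially through a chain of seven claims: it partitions $\Ff$ into matched/unmatched elements at each rank, then introduces an auxiliary sequence $T_1,\dots,T_\ell$ of ``new'' shadow-neighbors of the matched rank-$2$ elements, uses the fact that two planes meet in a single line to bound $|T_i|\ge q+1-i$, and from $|\Dd|=\tfrac{q(q+1)}{2}$ together with further neighbor-counting forces $q^2-3q+2\le 0$, hence $q=2$. Your route replaces all of this with the standard variance/Fisher-type argument on the incidence structure of $PG(2,q)$: the two double counts $\sum x_\ell=(q+1)a-m$ and $\sum\binom{x_\ell}{2}=\binom{a}{2}$, combined with Cauchy--Schwarz over the $N=q^2+q+1-b$ lines outside $B$, yield a single quadratic constraint in $m$ that does the heavy lifting. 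This is considerably more streamlined and makes the appearance of the threshold $q^2-q+1$ transparent; the price is that the algebraic verification (that $f(a)=a[aq^2-N(a+q-1)]>0$ whenever $N<a$ and $a\le b$, using $N\le (q^2+q)/2\le q^2-q+1$) needs to be written out carefully---your phrase ``a discriminant computation'' understates what is actually an endpoint evaluation plus monotonicity on $[0,a]$. Two small slips: you write ``with $b>0$'' where you mean ``with $a>0$'' (under $a\le b$ the former is automatic), and the ``main obstacle'' you flag---checking $q^2-q+1>(q^2+q+1)/2$ iff $q\ge 3$---is a one-line inequality, not the crux; the real work is the Cauchy--Schwarz bound itself.
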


\begin{proof}
The collection of elements of rank $1$ (or rank $2$) in $\Lnq{3}{q}$ certainly forms a $(\wedge, \vee)$-free subposet of size $\gbin{3}{1}_q = q^2+q+1$. Hence, $\size{\Ff} \geq q^2+q+1 \geq 7$. An element of $\mathcal{F}$ can be comparable to at most one other element of $\mathcal{F}$ since otherwise $\mathcal{F}$ will contain a $\wedge$ or a $\vee$. Thus $\Ff$ cannot contain the maximal or the minimal element of $\Lnq{3}{q}$ (the elements of rank 3 and rank 0) since otherwise it could have at most two elements. Let $\Aa$ and $\Bb$ consist, respectively, of elements of rank $1$ and $2$ of  $\mathcal{F}$ that are \emph{not} comparable to any other element of $\mathcal{F}$. Likewise $\Xx$ and $\Yy$ are, respectively, elements of rank $1$ and $2$ of $\mathcal{F}$ that are comparable to exactly one other element of $\Ff$. The families $\Aa$, $\Bb$, $\Xx$, and $\Yy$ give a partition of $\mathcal{F}$, and each element of $\Xx$ is comparable to exactly one element of $\Yy$. As a result $\size{\Xx} = \size{\Yy}$.  Let $\Dd$ and $\Ee$ consist, respectively, of elements of rank $1$ and $2$ of $\Lnq{3}{q}$ that are not in $\mathcal{F}$. See Figure \ref{fig:L3qwedgeveefree}.
 
 \begin{figure}[h]
 \begin{tikzpicture}
 \draw[line width=1pt] (0,0)--node[below]{$\Dd$}(2,0);
 \draw[line width = 1pt] (2,.2) rectangle node[below=4pt]{$\Xx$} (4.5,-.2);
 \draw[line width = 1pt] (4.5,.2) rectangle node[below=5pt]{$\Aa$}  (6,-.2);
  \draw[line width=1pt] (4.5,2)--node[above]{$\Ee$}(6.7,2);
 \draw[line width = 1pt] (2,2.2) rectangle node[above=4pt]{$\Yy$}  (4.5,1.8);
  \draw[line width = 1pt] (.7,2.2) rectangle node[above=5pt]{$\Bb$}  (2,1.8) ;
 \foreach \x in {2.3,2.6,2.9,4.2}{
 \fill (\x,2) circle (2pt);
  \fill (\x,0) circle (2pt);
  \draw (\x,0)--(\x,2);
 }
 \end{tikzpicture}
 \caption{The families $\Aa$, $\Bb$, $\Xx$, and $\Yy$ give a partition of the $(\wedge, \vee)$-free family $\Ff$.\label{fig:L3qwedgeveefree}}
 \end{figure}
 
Without loss of generality, and by symmetry, we assume that $\size{\Bb} \leq \size{\Aa}$, and since $\size{\Ff} \geq q^2 + q + 1$, we let $\size{\Ff} = q^2 + q + 1 + t$ for some non-negative integer $t$. 

\newcounter{mycl}
\setcounter{mycl}{0}
\newcommand{\myclaim}{\refstepcounter{mycl}\noindent\textsc{Claim \themycl:\/}}

\bigskip

\myclaim\ \label{cl:Ddnotempty} If  $\Dd$ is empty, then $t = 0$ and $\Ff = \Aa$ consists of all subspaces of dimension $1$.
\begin{proof}[Proof of Claim]
Elements of $\Bb$ have all their neighbors in $\Dd$ and elements of $\Yy$ have $q$ of their neighbors in $\Dd$. So if $\size{\Dd} = 0$, then $\Bb$ and $\Yy$ are both empty. The set $\Xx$ has as many elements as $\Yy$ and so it must be empty as well. Thus $\Ff = \Aa$ must consists of all subspaces of dimension $1$. 
\end{proof}

Because of Claim \ref{cl:Ddnotempty}, from here on, we assume that $\size{\Dd} > 0$, and we aim to show that this is only possible if $q=2$ and we have the configuration on the right of Figure \ref{fig:FanoExtremes}.

\bigskip

\myclaim\ \label{cl:DneighborinE} If $\Aa$ is non-empty, then every element of $\Dd$ has at least one neighbor in $\Ee$.
\begin{proof}[Proof of Claim]
Let $w$ be an arbitrary 1-dimensional subspace in $\Lnq{3}{q}$. Note that $\size{\shade w} = q+1$, and the intersection of each pair of 2-dimensional subspaces in $\shade w$ is exactly $w$. (If $u$ and $v$ are distinct 2-dimensional subspaces then $\dim(u \cap v) = \dim(u)+\dim(v)-\dim(u + v) = 1$.) Each of these 2-dimensional subspaces has $q+1$ 1-dimensional subspaces, and so $\size{\shadow(\shade w)} = (q+1)q + 1 = q^2 + q + 1 = \gbin{3}{1}$. We conclude that $\shadow(\shade w)$ is the entire collection of 1-dimensional subspaces in $\Lnq{3}{q}$.
Let $d \in \Dd$. We just proved that $\shadow(\shade d)$ contains all of $\Aa$, but the elements of $\Bb \cup \Yy$ have no neighbors in $\Aa$. We conclude that $\shade d$ must have some elements from $\Ee$. 
\end{proof}

\myclaim\ \label{cl:sizeDpt}$\size{\Dd} + t = \size{\Bb} + \size{\Yy}$, and $\size{\Dd} \leq \frac{q^2+q}{2}$.
\begin{proof}[Proof of Claim]  From the definitions, we have $\size{\Aa} + \size{\Xx} + \size{\Bb} + \size{\Yy} = \size{\Ff} = q^2+q+1+t $ and $\size{\Dd} + \size{\Xx} + \size{\Aa} =\gbin{3}{1}_q = q^2+q+1$. Combining them we get $\size{\Dd} + t = \size{\Bb} + \size{\Yy}$. Also, since $\size{\Aa} \geq \size{\Bb}$ and $\size{\Xx} = \size{\Yy}$ we have $2(\size{\Aa} + \size{\Xx}) \geq q^2 + q + 1$ and hence, since $q^2+q+1$ is odd and $\size{\Aa} + \size{\Xx}$ is an integer,  $\size{\Aa} + \size{\Xx} \geq \frac{q^2+q}{2} + 1$. It follows that $\size{\Dd} = q^2+q+1 - \size{\Aa} - \size{\Xx} \leq \frac{q^2+q}{2}$.
\end{proof}

\myclaim\ \label{cl:sizeYyq} $\size{\Yy} \geq q$.

\begin{proof}[Proof of Claim] If $\Aa$ was empty, then $\Bb$ is also empty (recall that we are assuming $\size{\Aa} \geq \size{\Bb}$) and $\Ff = \Xx \cup \Yy$. So $\size{\Yy} = \size{\Ff}/2 \geq q$. So assume $\Aa$ is not empty and denote by $E(\Bb \cup \Yy, \Dd)$ the edges between $\Bb \cup \Yy$ and $\Dd$. By Claim \ref{cl:DneighborinE}, every $d \in \Dd$ must have at least one neighbor in $\Ee$ and so has at most $q$ neighbors in $\Bb$. On the other hand, all the neighbors of elements of $\Bb$ are in $\Dd$, and  every element of $\Yy$ has one neighbor in $\Xx$ and $q$ neighbors in $\Dd$. Counting $\size{E(\Bb \cup \Yy, \Dd)}$ in two different ways, we have 
\begin{align*} q \size{\Dd} \geq \size{E(\Bb \cup \Yy, \Dd)} & =(q+1)\size{\Bb} + q\size{\Yy}\\
&= (q+1)\left(\size{\Bb} + \size{\Yy}\right) - \size{\Yy} \\
& = (q+1)(\size{\Dd}+t) - \size{\Yy} \qquad (\mbox{by Claim \ref{cl:sizeDpt}}).
\end{align*}
As a result, $\size{\Yy} \geq (q+1)t + \size{\Dd} \geq \size{\Dd} > 0$. Let $y \in \Yy$. The element $y$ has $q$ neighbors in $\Dd$. So $\size{\Yy} \geq \size{\Dd} \geq q$.
\end{proof}

Let $\Yy = \{y_1, y_2, \ldots, y_\ell\}$ (where $\ell$ is an integer no less than $q$), and define a sequence of subsets of $\Dd$ as follows:
\begin{align*}
T_1 & =  \shadow y_1 - \Xx \\
T_2 & =  \shadow y_2 - \left(\Xx \cup T_1\right) \\
T_3 & =  \shadow y_3 - \left(\Xx \cup T_1 \cup T_2\right) \\
\vdots \\
T_{\ell} & =  \shadow y_{\ell} - \left(\Xx \cup T_1 \cup \cdots \cup T_{\ell-1}\right)
\end{align*}

\bigskip

\myclaim\ \label{cl:uintTi} Let $i$ be an integer with $1 \leq i \leq \ell$ and $u$ an arbitrary element of rank $2$ in $\Lnq{3}{q}$ with $u \neq y_i$. Then $\shadow u$ has at most one element in $T_i$.
\begin{proof}[Proof of Claim]
Both $u$ and $y_i$ are 2-dimensional subspaces and $\dim(u \cap y_i)  = 1$. Hence, $\shadow u \cap \shadow y_i = \{u \cap y_i\}$ contains a unique element of rank $1$ in $\Lnq{3}{q}$, and the only possible element in $\shadow u \cap T_i$.
\end{proof}

\myclaim\ \label{cl:sizeDd}  $\size{\Bb} = 0$ and $\size{\Dd} = \frac{q(q+1)}{2}$. Moreover, for $1 \leq i \leq q$, $\size{T_i} = q+1-i$, and $\{T_1, T_2, \ldots, T_q\}$ partition $\Dd$.
\begin{proof}[Proof of Claim]
Let $1 \leq i \leq \ell$. Recall that $|\shadow y_i| = q+1$, and $y_i$ has exactly one neighbor in $\Xx$. For each $1 \leq j < i$, Claim \ref{cl:uintTi} implies that $\size{T_i \cap \shadow y_j} \leq 1$. Thus we have that $\size{T_i} \geq q + 1 - i$.
Since $\ell \geq q$ (by Claim \ref{cl:sizeYyq}) we have
$$\size{\Dd} \geq \size{T_1} + \cdots + \size{T_{q}} \geq q + (q-1) + \cdots +1 = \frac{q(q+1)}{2}.$$
Because of Claim \ref{cl:sizeDpt}, we now have  $\size{\Dd} = \frac{q(q+1)}{2}$. This means that every inequality in the above equation is actually an equality, and, for $1 \leq i \leq q$, $\size{T_i} = q+1-i$, and $T_1$, $\ldots$, $T_q$ partition $\Dd$. Now if $b \in \Bb$, then all $q+1$ elements of $\shadow b$ would be in $\Dd$, and so, by the pigeon hole principle, two of these elements would have to be in the same $T_i$. But this contradicts Claim \ref{cl:uintTi}, proving that $B = \emptyset$.
\end{proof}

\myclaim\ \label{cl:qis2} $q = 2$, $t = 0$, $\size{\Aa} = 1$, $\size{\Xx} = \size{\Yy} = 3$.

\begin{proof}[Proof of Claim]
Since, by Claim \ref{cl:sizeDd}, $\size{\Bb} = 0$ and $\size{\Dd} = \frac{q^2+q}{2}$, we have by Claim \ref{cl:sizeDpt} that $\ell = \size{\Yy} = \size{\Dd} + t = \frac{q^2+q}{2} + t$. Now 
$$q^2 + q + 1 + t = \size{\Ff} = \size{\Aa} + 2\size{\Yy} = \size{\Aa} + q^2+q+ 2t.$$
Thus $\size{\Aa} + t = 1$. So either $\size{\Aa} = 1$ and $t = 0$, or $\size{\Aa} = 0$ and $t = 1$.

Focus on the set $T_q \subseteq D$. By Claim \ref{cl:sizeDd}, $\size{T_q} = 1$, and so let $T_q = \{d\}$. The element $d$ has $q+1$ neighbors in $\Yy \cup \Ee$. Recall that $\Yy=\{y_1, y_2, \ldots, y_\ell\}$, each $y_i$ has $q$ neighbors in $\Dd$, and $T_1$, $\ldots$, $T_q$ partition $\Dd$. Hence, by Claim \ref{cl:uintTi}, each of $y_q$, $\ldots$, $y_\ell$ has exactly one neighbor in each of $T_1$, $\ldots$, $T_q$. In particular, $d$ neighbors with each of $y_q$, $\ldots$ $y_{\ell}$. Thus $d$ has at least $\ell-q+1$ neighbors in $\Yy$. In the case $\size{\Aa} = 0$, we have $\ell =  \frac{q^2+q}{2} +1$, and so we have to have $\frac{q^2+q}{2}+1 -q +1 \leq q+1$. In the case $\size{\Aa} = 1$, $\ell = \frac{q^2+q}{2}$, but in this case, $d$ has at least one neighbor in $\Ee$ (Claim \ref{cl:DneighborinE}). So we must have $\frac{q^2+q}{2}-q+1 \leq q$. Thus in both cases, we have $\frac{q^2+q}{2} +1 \leq 2q$ which is equivalent to $q^2 - 3q + 2 \leq 0$. The only prime power that satisfies this inequality is $q = 2$, and so we are limited to $\Lnq{3}{2}$, the Fano plane. If $t = 1$, then $\size{\Aa} = \size{\Bb} = 0$ and $\size{\Xx} = \size{\Yy} = 4$, and any such configuration (four points and four lines in the Fano plane) will contain a $\wedge$ or a $\vee$. Hence, $t = 0$, $\size{\Aa} = 1$, $\size{\Bb} = 0$, and $\size{\Xx} = \size{\Yy} = 3$.
\end{proof}

The configuration in Claim \ref{cl:qis2} is exactly the configuration on the right in Figure \ref{fig:FanoExtremes}. Hence, we have shown that if $\size{\Aa} \geq \size{\Bb}$, then either $\Ff = \Aa$ is the collection of all 1-dimensional subspaces or we have the specific configuration in the Fano plane. The proof of Proposition \ref{prop:L3q} is now complete.
\end{proof}

Let $G = (X, \Delta, Y)$ be a $d$-regular bipartite graph ($X \cup Y$ is the set of vertices, $\Delta$ is the set of the edges, and all the edges go from $X$ to $Y$). We can view $G$ as a regular graded poset with two levels. A reasonable question is whether $X$ and $Y$ are the only maximum-sized $(\wedge, \vee)$-free subgraphs of $G$. The example of the incidence graph of the Fano plane (Figure \ref{fig:FanoExtremes}) shows that the answer is no. The example of Figure \ref{fig:bipartiteno} shows that a maximum-sized $(\wedge, \vee)$-free subgraph may even be bigger than $X$ or $Y$.
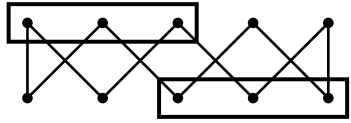
\begin{figure}[ht]
\begin{tikzpicture}
\foreach \x in {1,2,3,4,5}{
\fill (\x,0) circle (2pt);
\fill (\x,1) circle (2pt);
}
\draw[line width=1] (1,0)--(1,1)
(1,0)--(2,1)
(2,0)--(1,1)
(2,0)--(3,1)
(3,0)--(2,1)
(3,0)--(4,1)
(4,0)--(3,1)
(4,0)--(5,1)
(5,0)--(4,1)
(5,0)--(5,1);
\draw[line width=1.5] (.75,1.25) rectangle (3.25,.75)
(2.75,.25) rectangle (5.25,-.25);
\end{tikzpicture}
\caption{Proposition \ref{prop:L3q} does not generalize to general regular bipartite graphs. The boxed elements form a maximum-sized $(\wedge, \vee)$-free subgraph.\label{fig:bipartiteno}}
\end{figure}

If $U$ and $W$ are subspaces in $\Lnq{n}{q}$ with $U < W$, then the interval $[U, W]$ consists of all subspaces that contain $U$ and are contained in $W$. In other words, $[U, W] = \{ V \in \Lnq{n}{q} \mid U \leq V \leq W\}$. If $U$ is of dimension $m$ and $W$ of dimension $n$, then $[U, W]$ is a poset isomorphic to $\Lnq{n-m}{q}$. Before we prove Theorem \hyperlink{thmA}{A}, we gather the (somewhat tedious) argument for a very special case in a preliminary lemma. 

\begin{lem}\label{lem:n5q2}
Let $\Ff$ be a $(\wedge, \vee)$-free family of subspaces in $\Lnq{5}{2}$. Assume that $\size{\Ff} = \gbin{5}{2}_2$, and that the elements of $\Ff$ are either subspaces of dimension $2$ or $3$. Further assume that for arbitrary $U, W \in \Lnq{5}{2}$ with $\dim(U) = 1$, $\dim(W) = 4$, and $U < W$, we have $\size{\Ff \cap [U, W]} = 7$. Then $\Ff$ consists of either all subspaces of dimension $2$ or all subspaces of dimension $3$ in $\Lnq{5}{2}$.
\end{lem}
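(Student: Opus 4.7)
The plan is to establish that for every $1$-dimensional subspace $U$ of $(\F_2)^5$, the number $a_U$ of $2$-dimensional elements of $\Ff$ containing $U$ lies in $\{0, 15\}$, and then use a short propagation argument to finish. To start, I would sum the hypothesis $\size{\Ff \cap [U, W]} = 7$ over the $15$ subspaces $W$ of dimension $4$ containing $U$, noting that a dim-$2$ (resp.\ dim-$3$) element of $\Ff$ above $U$ lies in $\gbin{3}{2}_2 = 7$ (resp.\ $\gbin{2}{1}_2 = 3$) such $W$'s. This yields
\[ 7 a_U + 3 b_U = 105, \]
where $b_U$ counts the dim-$3$ elements of $\Ff$ containing $U$, and integrality restricts $a_U$ to $\{0, 3, 6, 9, 12, 15\}$. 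I then pass to the quotient $\Lnq{5}{2}/U \cong \Lnq{4}{2}$, in which $F_U := \{V \in \Ff : U \le V\}$ is $(\wedge, \vee)$-free with $a_U$ elements of rank $1$ and $b_U$ elements of rank $2$. The $(\wedge, \vee)$-freeness makes the containment relation between these two ranks of $F_U$ a matching of size $m_U \le \min(a_U, b_U)$, with every matched rank-$2$ element containing exactly one rank-$1$ element of $F_U$.

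To rule out $a_U \in \{3, 6, 9, 12\}$, let $n_k$ denote the number of rank-$2$ subspaces of $\Lnq{4}{2}$ containing exactly $k$ of the $a_U$ rank-$1$ elements of $F_U$. The identities $n_1 + 2 n_2 + 3 n_3 = 7 a_U$ and $n_2 + 3 n_3 = \binom{a_U}{2}$ give
\[ \size{S'} := n_1 + n_2 + n_3 = \frac{a_U(15 - a_U)}{2} + n_3, \qquad n_1 = 8 a_U - a_U^2 + 3 n_3. \]
Two bounds on $m_U$ drive the arguments: because only $35 - b_U$ rank-$2$ subspaces lie outside $F_U$, we get $m_U \ge \size{S'} - (35 - b_U)$; and because every matched rank-$2$ element of $F_U$ contains exactly one rank-$1$ element of $F_U$, we get $n_1 \ge m_U$. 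For $a_U \in \{3, 6\}$, the first bound alone yields $m_U \ge 11$ or $m_U \ge 13$, contradicting $m_U \le a_U$. For $a_U \in \{9, 12\}$, combining the first bound with $m_U \le \min(a_U, b_U)$ squeezes $n_3$ into a narrow window, and the resulting small value of $n_1$ violates $n_1 \ge m_U$.

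Once $a_U \in \{0, 15\}$ is established for every dim-$1$ subspace $U$, the propagation finishes the proof. If $a_U = 15$ for some $U$, then every dim-$2$ subspace containing $U$ lies in $\Ff$; for any other dim-$1$ subspace $V$, the dim-$2$ subspace $U + V$ lies in $\Ff$ and contains $V$, so $a_V \ge 1$, which forces $a_V = 15$. Iterating, $a_V = 15$ for every one of the $31$ dim-$1$ subspaces $V$, so $\Ff$ contains every dim-$2$ subspace of $(\F_2)^5$ and hence consists entirely of dim-$2$ subspaces. The alternative $a_U = 0$ for all $U$ gives, by the analogous argument, $\Ff$ equal to the set of dim-$3$ subspaces. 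The main obstacle lies in the intermediate cases $a_U \in \{9, 12\}$: the shadow/matching-size bound alone does not defeat them, and it is the structural inequality $n_1 \ge m_U$ coming from the matching that closes them.
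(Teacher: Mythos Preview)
Your argument is correct and takes a genuinely different route from the paper's proof. The paper fixes a $4$-dimensional subspace $W$ and varies the $1$-dimensional $U$ below it, using the explicit classification of maximum $(\wedge,\vee)$-free families in the Fano plane (Proposition~\ref{prop:L3q}) to assign each interval $[U,W]$ a ``type'' from $\{(7,0),(0,7),(3,4),(4,3)\}$; it then walks through several carefully chosen $U$'s inside one fixed $W$, propagating constraints among these types until a contradiction emerges. You instead fix a $1$-dimensional $U$ and sum the hypothesis over all $4$-dimensional $W$'s above it to obtain the linear relation $7a_U+3b_U=105$, then work entirely in the quotient $\Lnq{4}{2}$ with shadow and matching inequalities to force $a_U\in\{0,15\}$; the final propagation via $U+V$ is clean. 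Your approach is self-contained---it makes no use of Proposition~\ref{prop:L3q} or the Fano configurations---at the cost of a somewhat delicate inequality chase in the cases $a_U\in\{9,12\}$. One point worth making explicit: in those two cases your ``narrow window'' for $n_3$ actually uses a \emph{lower} bound on $n_3$ as well, which comes from $n_1=8a_U-a_U^2+3n_3\ge 0$; once that is stated, the contradictions $n_1<m_U$ go through exactly as you describe. The paper's argument is more geometric but tied to ad hoc Fano-plane casework; yours is more systematic and would likely adapt better to other small parameters.
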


\begin{proof}
The conclusion is true, by the strict Sperner property of the linear lattices (Theorem \ref{thm:maxantichainLnq}), if $\Ff$ happens to be an anti-chain. So by way of contradiction assume $a, b \in \Ff$ with $a < b$ and $\dim(a) = 2$ and $\dim(b) = 3$. Fix $W \in \Lnq{5}{2}$ with $\dim(W) = 4$ and $b < W$. If $U \in \Lnq{5}{2}$ with $\dim(U) = 1$ and $U \leq W$, then the interval $[U, W]$ is isomorphic to $\Lnq{3}{2}$, and, we have assumed that $\size{\Ff \cap [U, W]} = \gbin{3}{1}_2 = 7$.  As a result $\Ff \cap [U, W]$ is one of the four possibilities spelled out in Proposition \ref{prop:L3q}. We say that $[U, W]$ is of type $(i, j)$ if $i$ and $j$ are respectively the number of subspaces of dimension $2$ and $3$ in $\Ff \cap [U, W]$. By Proposition \ref{prop:L3q}, the only types possible are $(7, 0)$, $(0, 7)$, $(3, 4)$, and $(4,3)$.

Fix $U_0 \in \Lnq{5}{2}$ with $\dim(U_0) = 1$ and $U_0 < a$. Since $U_0 < a < b < W$, and both $a$ and $b$ are in $\Ff$, $[U_0, W]$ is either of type $(3, 4)$ or $(4, 3)$. By symmetry and with no loss of generality, assume $[U_0, W]$ is of type $(3, 4)$. (See Figure \ref{fig:U0W}.)

\begin{figure}[ht]
\begin{tikzpicture}
\node (W) at (2.5,5){};
\draw (W)node[above=3pt]{\small$W$\normalsize} circle (2pt);
\node (b4) at (4,3.5){};
\fill (b4)node[above=3pt]{\small$b_4$\normalsize} circle (2.5pt);
\node (a4) at (1,2){};
\draw (a4)node[below=3pt]{\small$a_4$\normalsize} circle (2pt);
\node (U0) at (2.5,0.5){};
\draw (U0)node[below=3pt]{\small$U_0$\normalsize} circle (2pt);
\foreach \x in {1,2,3}{
\node (b\x) at ({\x/2},3.5){};
\fill (b\x)node[above=3pt]{\small$b_{\x}$\normalsize} circle (2.5pt);
\draw (b\x)--(a4);
}
\foreach \x in {5,6,7}{
\node (b\x) at ({\x/2-.5},3.5){};
\draw (b\x)node[above=3pt]{\small$b_{\x}$\normalsize} circle (2pt);
}
\foreach \x in {1,2,3}{
\node (a\x) at ({\x/2+1.5},2){};
\fill (a\x)node[below=3pt]{\small$a_{\x}$\normalsize} circle (2.5pt);
\draw (a\x)--(b\x);
}
\foreach \x in {5,6,7}{
\node (a\x) at ({\x/2+1},2){};
\draw (a\x)node[below=3pt]{\small$a_{\x}$\normalsize} circle (2pt);
\draw (a\x)--(b\x)
(a\x)--(b4);
}
\draw[thin] (b5)--(a1)
(b5)--(a2)
(b6)--(a2)
(b6)--(a3)
(b7)--(a1)
(b7)--(a3);
\draw[thin] (a5)--(b3)
(a6)--(b1)
(a7)--(b2);
\draw[line width=1.5pt] (W)--(0,3.8)--(0,1.7)--(U0)--(5,1.7)--(5,3.8)--(W);
\foreach \x in {1,2,3,4,5,6,7}{
\draw[thin] (W)--(b\x)
(U0)--(a\x);
}
\foreach \x in {1,2}{
\node (U\x) at ({2.5-\x},.5){};
\draw (U\x)node[below=3pt]{\small$U_{\x}$\normalsize} circle (2pt);
\draw (U\x)--(a4);
}
\node (0) at (2.5,-1){};
\draw (0)node[below=3pt]{\small$\{0\}$\normalsize} circle (2pt);
\foreach \x in {0,1,2}{
\draw[thin] (U\x)--(0);
}
\node (U3) at (3.5,.5){};
\draw (U3)node[below=3pt]{\small$U_3$\normalsize} circle (2pt);
\draw (U3)--(a5);
\draw[line width=1pt] (-2,3.5)--(0,3.5)
(5,3.5)--(7,3.5)
(-4,2)--(0,2)
(5,2)--(9,2)
(-2,.5)--(0,.5)
(5,.5)--(7,.5);
\node at (10,-1){$1$};
\node at (10,.5){$15$};
\node at (10,2){$35$};
\node at (10,3.5){$15$};
\node at (10,5){$1$};
\end{tikzpicture}
\caption{$W$ is a 4-dimensional vector space over $\F_2$, its subspaces form a poset isomorphic to $\Lnq{4}{2}$ with rank numbers $1$, $15$, $35$, $15$, and $1$. $U_0$ is a 1-dimensional subspace, and we have boxed out the interval $[U_0, W] \cong \Lnq{3}{2}$. The seven subspaces identified with a black dot form a maximum-sized $(\wedge, \vee)$-free collection in $[U_0, W]$. \label{fig:U0W}}
\end{figure}
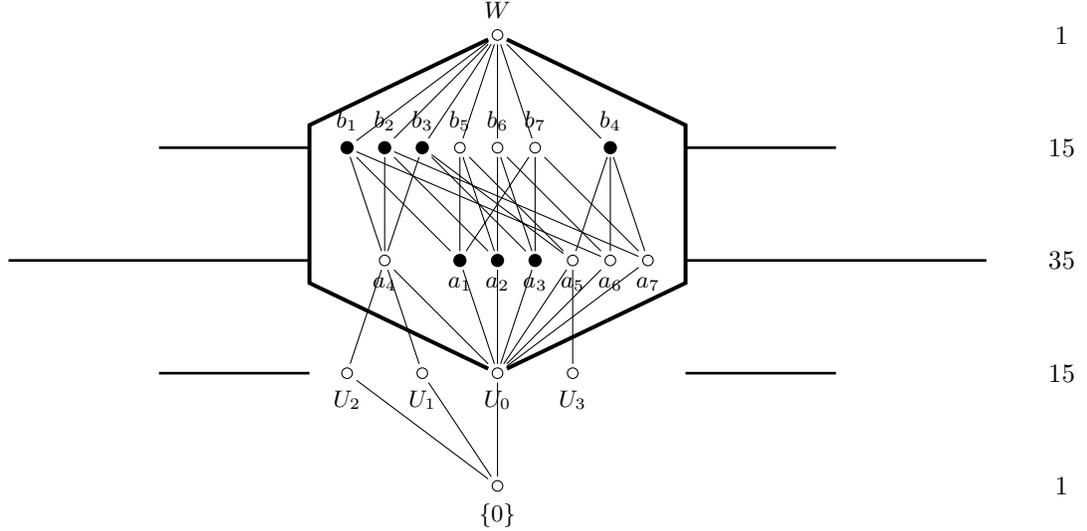 

Thinking of $[U_0, W]$ as the Fano plane, $\Ff \cap [U_0, W]$ is the configuration on the left of Figure \ref{fig:FanoExtremes}. In this configuration, the elements of $\Ff$ are three points $a_1$, $a_2$, and $a_3$ (subspaces of dimension $2$) that are vertices of a triangle, and the four edges (subspaces of dimension 3) that are \emph{not} the edges of that triangle. Among the four edges, one goes through none of the three points, and each of the three others have one of the vertices on them. Let the latter three lines be $b_1$, $b_2$, and $b_3$, and the former be $b_4$. We then have $b_1 \cap b_2  \cap b_3 = a_4 \not\in \Ff$. The interval $[U_0, W]$ has three more lines which are not in $\Ff$ which we call $b_5$, $b_6$, and $b_7$. There are also three more points not in $\Ff$ which we call $a_5$, $a_6$, and $a_7$. (See Figure \ref{fig:U0W}.) 

Focus on the point $a_4$. This is a 2-dimensional subspace in $[U_0, W]$ and has two 1-dimensional subspaces $U_1$ and $U_2$ other than $U_0$.  We claim that, for $i = 1, 2$, $[U_i, W]$ is of type $(0, 7)$. Both intervals contain $a_4 \not\in \Ff$ and so the type cannot be $(7,0)$. They also contain $b_1$, $b_2$, and $b_3$, and these three lines intersect in one point, and this means that the type is not $(4, 3)$ (in type $(4,3)$, the edges form a triangle and do not go through one point). To be of type $(3,4)$, $\Ff \cap [U_i, W]$ needs three lines each with one point of $\Ff$ on them. At least one (actually at least two) of these lines would have to be from among $b_1$, $b_2$, and $b_3$. But the points $a_1$, $a_2$, and $a_3$ are not in $[U_i, W]$ since, for $j = 1, 2, 3$, $a_4 \cap a_j = U_0$ and not $U_1$ or $U_2$. So at least one of $b_1$, $b_2$, or $b_3$ would need a point in $\Ff$ other than the corresponding $a_j$. But this would create a $\wedge$, and so, for $i = 1, 2$,  $[U_i, W]$ cannot be of type $(3, 4)$ either. We conclude that both $[U_1, W]$ and $[U_2, W]$ are of type $(0, 7)$. 
 
Note that if $b$ is any $3$-dimensional subspace in $W$, then $\dim(b \cap a_4) = \dim(b) + \dim(a_4) - \dim(b+a_4) \geq 3+2-4 = 1$ and so $b$ contains one of $U_0$, $U_1$, or $U_2$. Hence, all $3$-dimensional subspaces in $W$ are in one of $[U_i, W]$ for $i = 0, 1, 2$. We conclude that out of the 15 3-dimensional subspaces of $W$, the only ones \emph{not} in $\Ff$ are $b_5$, $b_6$, and $b_7$, and the other 12 3-dimensional subspaces of $W$ are in $\Ff$. 
 
If $a$ is any 2-dimensional subspace in $W$, then $a$ is contained in three $3$-dimensional subspaces of $W$. If $a \in \Ff$, then it could be contained by at most one 3-dimensional subspace in $\Ff$ (since otherwise $\Ff$ would contain a $\vee$). This means that $a$ would have to be contained in at least two of $b_5$, $b_6$, and $b_7$. But these lines form a triangle and their points of intersections are $a_1$, $a_2$, and $a_3$. Hence, the only 2-dimensional subspaces in $W \cap \Ff$ are $a_1$, $a_2$, and $a_3$.  
 
 The 3-dimensional subspace $b_5$ is not in $\Ff$ (it is one of the edges of a triangle in the Fano plane) and has three subspaces of dimension $2$ (points in the Fano plane) that contain $U_0$. Which ones are these? $a_4$ is not one of these since $a_4$ is already on the three lines $b_1$, $b_2$, and $b_3$. The set of the three points cannot be $\{a_1, a_2, a_3\}$ since these points are the corners of a triangle and don't lie on the same line. Hence $b_5$ has a point $a_5$ that is not in $\Ff$. Now let $U_3 \in \Lnq{5}{2}$ be such that $\dim(U_3) = 1$ and $U_3 < a_5$. Then $U_3 < a_5 < b_5 < W$ and neither $a_5$ not $b_5$ are in $\Ff$. Hence $[U_3, W]$ is not of type $(0, 7)$. It is also not of type  $(7, 0)$ or $(4,3)$ because there are only $3$ 2-dimensional subspaces of $W$ that are in $\Ff$. Finally, $[U_3, W]$ cannot be of type $(3,4)$ because if it was, it would have to include $a_1$, $a_2$, and $a_3$. However, $a_5 \cap a_0 = U_0$ and $U_0$ is the only 1-dimensional subspace that is contained in both $a_5$ and $a_1$. Hence $U_3$ is not contained in $a_1$.  This means that $[U_3, W]$ does not have a valid type which is a contradiction. The contradiction proves that $\Ff$ is an anti-chain after all and the proof is complete. 
\end{proof}

We are now ready to prove Theorem \hyperlink{thmA}{A}.

\begin{proof}[\textbf{Proof of Theorem \hyperlink{thmA}{A}}]
When $n$ is even, the result is a special case of Theorem \hyperlink{thmB}{B}. So let $n$ be odd and proceed by induction on $n$. The base case $n = 3$ was proved in Proposition \ref{prop:L3q} and so let $n = 2k+1 > 3$ and assume that the result is true for all smaller $n$. We first prove that $\mathrm{ex}(\Lnq{n}{q}; \wedge,\vee) = \gbin{n}{k}_q$. Note that $\floor{n/2} = k$ and the collection of all subspaces of dimension $k$ (as well as the collection of those of dimension $k+1$) does not contain a $\wedge$ or a $\vee$. Hence,  $\mathrm{ex}(\Lnq{n}{q}; \wedge,\vee) \geq \gbin{n}{k}_q$. Now, let $\Ff$ be a family of subspaces that does not contain a $\wedge$ or a $\vee$. By Corollary \ref{cor:Lnqbroomfork1}, to show that $\size{\Ff} \leq \gbin{n}{k}_q$, we can assume that $\Ff$ consists only of subspaces of dimensions $k$ or $k+1$.

If $U, W \in \Lnq{n}{q}$ with $\dim(U) = 1$ and $\dim(W) = n-1$, then the interval $[U, W] = \{ V \in \Lnq{n}{q} \mid U \subseteq V \subseteq W\}$ is a poset isomorphic to $\Lnq{n-2}{q}$---the poset $[U,W]$ is isomorphic to the poset of subspaces of $W/U$---and so by induction $\size{\Ff \cap [U, W]} \leq \gbin{n-2}{k-1}_q$. Now consider
$$\Ss = \{ (V, [U, W]) \mid V \in \Ff, U \subseteq V \subseteq W, \dim(U)=1, \dim(W) = n-1\}.$$
Note that in $\Lnq{n}{q}$, there are $\gbin{n}{n-1}_q = [n]_q$ subspaces of dimension $n-1$ and each of these contains $\gbin{n-1}{1}_q = [n-1]_q$ subspaces of dimension $1$. Hence, there are a total of $[n]_q [n-1]_q$ intervals $[U, W]$ with $\dim(U) = 1$ and $\dim(W) = n-1$. Likewise, any subspace of dimension $k$ (or $k+1$) contains $[k]_q$ (or $[k+1]_q$) subspaces of dimension $1$ and is contained in $[k+1]_q$(or $[k]_q$) subspaces of dimension $n-1$. Hence, every subspace of dimension $k$ or $k+1$ is contained in $[k]_q [k+1]_q$ intervals $[U,W]$ with $\dim(U) =1$ and $\dim(W) = n-1$. Now, counting the size of $\Ss$ in two different ways we get
$$\size{\Ff} [k]_q [k+1]_q = \size{\Ss} = \sum_{\substack{U \subseteq W\\ \dim (U) = 1 \\ \dim (W) = n-1}} \size{\Ff \cap [U,W]} \leq [n]_q [n-1]_q \gbin{n-2}{k-1}_q.$$
Hence, 
$$\size{\Ff} \leq \frac{[n]_q [n-1]_q [n-2]_q!}{[k+1]_q[k]_q[k-1]_q! [k]_q!} = \frac{[n]_q!}{[k]_q! [k+1]_q!} = \gbin{n}{k}_q,$$
and we have proved that $\mathrm{ex}(\Lnq{n}{q}; \wedge,\vee) = \gbin{n}{k}_q$.

Now let $\Ff$ be a maximum-sized $(\wedge, \vee)$-free family of subspaces in $\Lnq{n}{q}$. Initially, we assume that every element of $\Ff$ is of dimension $k$ or $k+1$. In the calculation above for $\size{\Ss}$, we must have equality throughout and so $\size{\Ff \cap [U, W]} = \gbin{n-2}{k-1}_q$ for every interval $[U, W]$ where $U \subseteq W$ and $\dim(U) = 1$ and $\dim(W) = n-1$. Since $\Ff \cap [U, W]$ is $(\wedge, \vee)$-free, and as long as $n-2 > 3$ or $q > 2$, by induction all elements of $\Ff \cap [U, W]$ have rank $k$ (in $\Lnq{n}{q}$), or all elements have rank $k+1$. We claim that this means that $\Ff$ is an anti-chain. This is because if $a, b \in \Ff$ and $a < b$, then both $a$ and $b$ are contained in the same interval $[U, W]$ (take $U$ be a 1-dimensional subspace of $a$ and $W$ an $n-1$ dimensional subspace that contains $b$), and we just proved that the subspaces in $\Ff \cap [U, W]$ are all of the same dimension. Hence, $\Ff$ is an anti-chain of size $\gbin{n}{k}_q$ and by the strict Sperner property (Theorem \ref{thm:maxantichainLnq}), $\Ff$ is one of the middle levels of $\Lnq{n}{q}$. The only case remaining (given the additional assumption that $\Ff$ is limited to the middle two levels) is when $n = 5$ and $q = 2$, and this was proved in Lemma \ref{lem:n5q2}.

We now complete the proof by showing that a maximum-sized $(\wedge, \vee)$-free family $\Ff$ must be limited to the middle two levels. Assume $\Ff$ is a $(\wedge, \vee)$-free family of subspaces of $\Lnq{n}{q}$ of size $\gbin{n}{k}_q$, and assume $\Ff$ has elements outside of the two middle levels, then we can use Proposition \ref{prop:Pushdown} and/or Proposition \ref{prop:Pushup} to find $\Ff^\prime$ a $(\wedge, \vee)$-free family of subspaces of exactly the same size that is limited to the middle two levels. These propositions replace all subspaces of dimension greater than $\ceil{\frac{n}{2}}$ with an equal number of subspaces of dimension $\ceil{\frac{n}{2}}$. Likewise, all subspaces of dimension less than $\floor{\frac{n}{2}}$ are replaced with an equal number of subspaces of dimension $\floor{\frac{n}{2}}$. If $\Ff$ originally contained subspaces of dimensions more than $n/2$ as well as subspaces with dimensions less than $n/2$, then the resulting family $\Ff^\prime$ would have subspaces of dimensions $\floor{n/2}$ as well as $\ceil{n/2}$. But we just proved that such a maximum-sized family does not exist. Hence, without loss of generality, assume that the dimension of all subspaces in $\Ff$ is greater than $n/2$. Now use Proposition \ref{prop:Pushdown} to construct a maximum-sized $(\wedge, \vee)$-free family $\Ff^\prime$ that is confined to levels $\ceil{n/2}$ and $\ceil{n/2} + 1$. (In other words, stop one step before pushing all elements of $\Ff$ into level $\ceil{n/2}$). At this point, apply Proposition \ref{prop:NMbroomfree} (in the parlance of that Proposition $\alpha > 0$) to get a larger family that is confined to the level $\ceil{n/2}$. This family will certainly be $(\wedge, \vee)$-free since it is confined to one level. This proves that the original family could not have been maximum-sized if it originally contained any elements outside of the middle two levels. This completes the proof.
\end{proof}

\begin{rem}
While it was convenient to organize the proof by using the fact that the only maximum-sized anti-chains in $\Lnq{n}{q}$ are $\bc{\Lnq{n}{q}}{\floor{n/2}}$ and  $\bc{\Lnq{n}{q}}{\ceil{n/2}}$, this wasn't strictly necessary. It is not too difficult to provide direct arguments thereby providing an alternative proof for Theorem \ref{thm:maxantichainLnq}, the characterization of maximum anti-chains in $\Lnq{n}{q}$.
\end{rem}

\section{Butterflies, Y, and Y'}

To get an upper bound for the maximal size of a butterfly-free or (Y, Y')-free family in $\Lnq{n}{q}$, we will partition any such family $\Ff$ into anti-chains, and then use the familiar technique for proving LYM inequalities by counting maximal chains of $\Lnq{n}{q}$ that pass through these. This is the strategy of De Bonis et al. \cite{DeBonisKatonaSwa:05} in proving Theorem \ref{thm:nobutterflyboolean}, and our proof of the preparatory inequalities (Lemma \ref{lem:twoantichains} and Proposition \ref{prop:LYMtype}) follows their proof, with modifications for the linear lattice case.

\begin{lem}\label{lem:twoantichains} 
Let $n$ be a positive integer, and $q$ a power of a prime. Let $\Mm, \Aa \subseteq \Lnq{n}{q} - \{\{0\}, (\F_q)^n\}$ be two disjoint anti-chains such that either for all $A \in \Aa$, there exists a unique $M \in \Mm$ such that $A < M$, or for all $A \in \Aa$, there exists a unique $M \in \Mm$ such that $A > M$.
Then
$$\sum_{M \in \Mm} {n \brack \dim M}_q^{-1} + \frac{q}{q+1} \sum_{A \in \Aa} {n \brack \dim A}_q^{-1} \leq 1.$$
\end{lem}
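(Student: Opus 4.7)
\medskip
\noindent\textbf{Proof plan.} I will follow the classical LYM strategy of double counting maximal chains in $\Lnq{n}{q}$, exactly as advertised in the preceding paragraph of the paper. Recall that the total number of maximal chains in $\Lnq{n}{q}$ is $[n]_q!$, and for any subspace $V$ of dimension $k$ (with $0<k<n$) the number of maximal chains through $V$ is $N(V) := [k]_q!\,[n-k]_q!$, so that $N(V)/[n]_q! = {n \brack k}_q^{-1}$. Multiplying the desired inequality by $[n]_q!$ reduces the lemma to showing
$$\sum_{M\in\Mm} N(M) + \frac{q}{q+1}\sum_{A\in\Aa} N(A) \;\leq\; [n]_q!.$$
By the order-reversing duality on $\Lnq{n}{q}$ (which swaps levels $k$ and $n-k$ and so preserves every Gaussian coefficient), the two cases in the hypothesis are interchanged; so I would only treat the case where, for every $A\in\Aa$, there is a unique $M\in\Mm$ with $A<M$, which I denote $M_A$.

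Next, I would partition the set of all maximal chains of $\Lnq{n}{q}$ into four classes based on which of the anti-chains they meet: $C_\emptyset$ (meet neither), $C_M$ (meet $\Mm$ only), $C_A$ (meet $\Aa$ only), and $C_{MA}$ (meet both). Since $\Mm$ and $\Aa$ are anti-chains, every chain meets each in at most one element, so the left-hand side of the displayed inequality equals
$$|C_M|+|C_{MA}|+\frac{q}{q+1}\bigl(|C_A|+|C_{MA}|\bigr),$$
while the total count is $|C_\emptyset|+|C_M|+|C_A|+|C_{MA}|=[n]_q!$. A short rearrangement shows that proving the lemma is equivalent to establishing
$$q\,|C_{MA}| \;\leq\; (q+1)\,|C_\emptyset| + |C_A|.\qquad(\star)$$
At this point the uniqueness hypothesis enters: if $C\in C_{MA}$ meets both $A\in\Aa$ and $M\in\Mm$, then $A$ and $M$ are comparable on $C$, and the uniqueness of $M_A$ forces either $M=M_A$ (with $A<M$) or $M<A$. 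In particular $\Mm\cup\Aa$ contains no $3$-chain, which is the structural fact that will make $(\star)$ provable.

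The heart of the proof — and the main obstacle — is to establish $(\star)$ with the sharp constant $q$ (rather than any constant $\geq q$). My plan is to construct, for each $C\in C_{MA}$, a family of $q\cdot [n-\dim M_C]_q$ ``sibling'' chains obtained by modifying $C$ from the level $\dim M_C -1$ upward: fix the bottom portion of $C$ up to and including the unique element $V$ of dimension $\dim M_C - 1$ on $C$, and then replace the tail $V\subset M_C\subset\cdots$ by any of the maximal chains of $[V,(\F_q)^n]\cong\Lnq{n-\dim V}{q}$ that do \emph{not} pass through $M_C$. The number of such replacement tails is $[n-\dim V]_q!-[n-\dim M_C]_q! = q\,[n-\dim M_C]_q \cdot [n-\dim M_C]_q!$, which is what gives the ratio $q:1$ in $(\star)$. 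Each sibling passes through $A_C$ but not through $M_C$; using the uniqueness of $M_A$ above each $A\in\Aa$, and the fact that $\Mm$ is an anti-chain, I would verify that every sibling lies in $C_\emptyset\cup C_A$ (any $M\in\Mm$ on a sibling would have to be comparable to both $A_C$ and $M_C$, which the anti-chain and uniqueness hypotheses rule out). Finally, a double count of the pairs (original chain, sibling) produces $(\star)$, treating the two subcases $A_C<M_C=M_{A_C}$ and $M_C<A_C$ uniformly since only the level of $M_C$ and the subspace just below it on $C$ enter the construction.

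The one delicate point I expect to have to check carefully is that the sibling map is sufficiently injective: a given chain in $C_\emptyset\cup C_A$ can be a sibling of several chains in $C_{MA}$, so I will need to bound the multiplicity. The correct bookkeeping is to count pairs $(C,C')$ with $C\in C_{MA}$, $C'$ a sibling of $C$, and to recognize that each unordered pair contributing to the count is determined by its common ``stem'' through $V$ and the element $M_C$; the resulting identity matches $q\,|C_{MA}|$ with a sum bounded termwise by $(q+1)|C_\emptyset|+|C_A|$.
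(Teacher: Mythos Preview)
Your setup is exactly the paper's: both count maximal chains, use that $\Mm,\Aa$ are antichains, and reduce by duality to the case $A<f(A)$. Your reformulation as $(\star)$ is correct. From that point, however, the paper finishes in one line rather than via a sibling construction. Since every chain in $C_{MA}$ meets a unique pair $A<f(A)$, one has $|C_{MA}|=\sum_{A\in\Aa}[\dim A]_q!\,[\dim f(A)-\dim A]_q!\,[n-\dim f(A)]_q!$, and dividing the $A$-summand by $N(A)$ yields ${n-\dim A\brack n-\dim f(A)}_q^{-1}$. Because $(\F_q)^n\notin\Mm$ forces $1\le n-\dim f(A)<n-\dim A$ with $n-\dim A\ge 2$, this Gaussian coefficient is at least $[2]_q=q+1$; hence $(q+1)|C_{MA}|\le \sum_A N(A)=|C_A|+|C_{MA}|$, i.e.\ $q|C_{MA}|\le |C_A|$, which is stronger than $(\star)$.

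Your sibling plan can be made to work but contains several slips you should fix. First, the subcase ``$M_C<A_C$'' never occurs: $M<A<M_A$ would put two comparable elements in the antichain $\Mm$. Consequently every sibling already contains $A_C$ and therefore lies in $C_A$, not merely in $C_\emptyset\cup C_A$; the $|C_\emptyset|$ term in $(\star)$ is irrelevant. Second, the ``ratio $q:1$'' is not what your tail replacement produces: for a fixed stem ending at $V$, there are $[n-\dim M_C]_q!$ tails through $M_C$ versus $q\,[n-\dim M_C]_q\cdot[n-\dim M_C]_q!$ tails avoiding it, a ratio of $q\,[n-\dim M_C]_q:1$. (A clean $q:1$ comes from a different construction---swapping only the level-$\dim M_C$ element of $C$ inside the rank-$2$ interval $[V,W]$, which gives exactly $q$ siblings and is genuinely injective.) If you carry the double count through correctly, grouping by $A$ and by the common stem, you recover precisely $q|C_{MA}|\le|C_A|$; but unwinding that computation is just the Gaussian inequality ${n-\dim A\brack n-\dim f(A)}_q\ge q+1$ in disguise, which is how the paper does it directly.
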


\begin{proof}
For $A \in \Aa$, denote by $f(A)$ the unique element of $\Mm$ comparable to $A$.  We prove the case where $A < f(A)$, for all $A \in \Aa$ (the argument for the other case is identical).  The total number of maximal chains in $\Lnq{n}{q}$ is $[n]_q!$ and, if $U, V \in \Lnq{n}{q}$ with $U \leq V$, then the interval $[U, V] = \{W \in \Lnq{n}{q} \mid U \leq W \leq V\}$ is isomorphic as a poset to the linear lattice of dimension $\dim V - \dim U$, and so has $[\dim V - \dim U]_q!$ maximal chains. As a result, for every $V \in \Lnq{n}{q}$, there are $[\dim V]_q!\ [n-\dim V]_q!$ maximal chains going through $V$. Since $\Aa$ and $\Mm$ are anti-chains, the total number of maximal chains that go through $\Aa \cup \Mm$ are the sum of the number of maximal chains that go through each element of $\Aa$ and $\Mm$ minus the number of such chains that go through one element in $\Aa$ and one element in $\Mm$. For $A \in \Aa$ and $f(A) \in \Mm$, the number of maximal chains of $\Lnq{n}{q}$ that go through both $A$ and $f(A)$ is
$$[\dim A]_q!\ [\dim f(A) - \dim A]_q!\ [n-\dim f(A)]_q!.$$
Hence, we have
\begin{align*}
[n]_q ! & \geq \sum_{A \in \Aa} [\dim A]_q!\ [n-\dim A]_q! + \sum_{M \in \Mm} [\dim M]_q!\ [n-\dim M]_q! \\
& - \sum_{A \in \Aa} [\dim A]_q!\ [\dim f(A) - \dim A]_q!\ [n-\dim f(A)]_q!.
\end{align*}
Dividing both sides by $[n]_q!$ and rearranging gives
$$1 \geq \sum_{M \in \Mm} {n \brack \dim M}_q^{-1} + \sum_{A \in \Aa} {n \brack \dim A}_q^{-1}\left(1-{n- \dim A \brack n- \dim f(A)}_q^{-1} \right). $$
Since $(\F_q)^n \not \in \Mm$, for all $A \in \Aa$ we have $\dim A < \dim f(A) < n$; in particular, $n - \dim A \geq 2$. Thus
$${n- \dim A \brack n- \dim f(A)}_q \geq [n-\dim A]_q \geq [2]_q =q+1.$$
Hence our desired inequality follows.
\end{proof}

We now prove an LYM-type inequality for $(\textrm{Y}, \textrm{Y'})$-free families in $\Lnq{n}{q}$.

\begin{prop}\label{prop:LYMtype}
Let $n \geq 3$ be an integer, $q$ a power of a prime, and let $\Ff$ be a $(\textrm{Y}, \textrm{Y'})$-free subposet of $\Lnq{n}{q}-\{\{0\}, (\F_q)^n\}$. Then
\begin{equation}\sum_{V \in \Ff} {n \brack \dim V}_q^{-1} \leq 2.\label{eq:LYMtype}\end{equation}
Furthermore, if we have equality in \eqref{eq:LYMtype}, then $\Ff$ is the union of its maximal and minimal elements.
\end{prop}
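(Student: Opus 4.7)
The plan is to partition $\Ff$ according to its own order structure and then apply Lemma \ref{lem:twoantichains} twice. Let $\Mm$ be the set of maximal elements of $\Ff$, $\mathcal{N}$ the set of minimal elements, and $\Ii = \Ff \setminus (\Mm \cup \mathcal{N})$ the intermediate elements. I will apply Lemma \ref{lem:twoantichains} once to the pair $(\Mm, \Ii)$ using the ``upward'' version of the hypothesis and once to $(\mathcal{N}, \Ii)$ using the dual ``downward'' version, and then add the two resulting inequalities.

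First I would establish two structural facts. Y-freeness forces every chain in $\Ff$ to have at most three elements: a chain $V_1 < V_2 < V_3 < V_4$ in $\Ff$, with $a=V_1$, $b=V_2$, $c=V_3$, $d=V_4$, would itself contain a Y as a subposet. This immediately yields that $\Ii$ is an anti-chain (a pair $V < V'$ in $\Ii$ together with strict lower and upper neighbors in $\Ff$ would give a chain of four). Next, each $V \in \Ii$ has a unique element of $\Ff$ strictly above it---two distinct upper neighbors together with any strict lower neighbor in $\Ff$ would form a Y---and this unique upper neighbor must lie in $\Mm$, because any strict upper neighbor of it would extend $V$'s chain in $\Ff$ to length four. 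The dual argument, using Y$'$-freeness, gives a unique strict lower neighbor of $V$ in $\mathcal{N}$. Both $\Mm$ and $\mathcal{N}$ are manifestly anti-chains, and they are disjoint from $\Ii$.

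With the hypotheses verified, applying Lemma \ref{lem:twoantichains} to $(\Mm, \Ii)$ and to $(\mathcal{N}, \Ii)$ and summing yields
$$\sum_{M \in \Mm} \gbin{n}{\dim M}_q^{-1} + \sum_{N \in \mathcal{N}} \gbin{n}{\dim N}_q^{-1} + \frac{2q}{q+1}\sum_{V \in \Ii} \gbin{n}{\dim V}_q^{-1} \leq 2.$$
On the left-hand side, elements of $\Mm \cap \mathcal{N}$ are counted twice, elements of $\Ii$ are counted with coefficient $\frac{2q}{q+1} > 1$ (as $q \geq 2$), and all remaining elements of $\Ff$ are counted exactly once. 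Hence the left-hand side bounds $\sum_{V \in \Ff}\gbin{n}{\dim V}_q^{-1}$ from above, which establishes \eqref{eq:LYMtype}. For the characterization of equality, if \eqref{eq:LYMtype} holds with equality then both over-counts must vanish, forcing $\Mm \cap \mathcal{N} = \emptyset$ and $\Ii = \emptyset$; consequently $\Ff = \Mm \cup \mathcal{N}$ is the disjoint union of its maximal and minimal elements.

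The part of the argument I expect to require the most care is the claim that the unique element of $\Ff$ above an intermediate $V$ is in fact maximal in $\Ff$ (rather than lying again in $\Ii$). This is exactly the step where the chain-length bound is indispensable, so I would foreground the ``no chain of four'' observation at the very beginning of the write-up and invoke it uniformly for both the up-cover and the dual down-cover arguments.
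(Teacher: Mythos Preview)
Your proof is correct and follows essentially the same route as the paper: split $\Ff$ into maximal, minimal, and intermediate elements, verify via the no-four-chain observation and $(\textrm{Y},\textrm{Y}')$-freeness that the intermediate part is an anti-chain with unique comparabilities into the other two parts, apply Lemma~\ref{lem:twoantichains} twice, and add. The only cosmetic difference is that the paper takes $\Mm_2$ to be the minimal elements of $\Ff-\Mm_1$ (so that $\Mm_1,\Mm_2,\Aa$ form a genuine partition), whereas you take $\mathcal{N}$ to be the minimal elements of $\Ff$ itself and then explicitly account for the possible overlap $\Mm\cap\mathcal{N}$; your bookkeeping handles this correctly and even yields the small bonus that equality forces $\Mm\cap\mathcal{N}=\emptyset$.
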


\begin{proof}
Let $\Mm_1$ be the set of maximal elements of $\Ff$, $\Mm_2$ the set of minimal elements of $\Ff-\Mm_1$, and $\Aa = \Ff - (\Mm_1 \cup \Mm_2)$. The families $\Mm_1$, $\Mm_2$ are anti-chains, and, by their definitions, for each $A \in \Aa$ there exists elements $M_1 \in \Mm_1$ and $M_2 \in \Mm_2$ such that $M_1 < A < M_2$. Moreover, since $\Ff$ is  $(\textrm{Y}, \textrm{Y'})$-free, $M_1$ and $M_2$ are unique. The family $\Aa$ is also an anti-chain since otherwise $\Ff$ would include a chain of length $3$, and would not be $\textrm{Y}$-free. 

We now apply Lemma \ref{lem:twoantichains} twice, once to $\Mm_1$ and $\Aa$ and once to $\Mm_2$ and $\Aa$. For $i = 1, 2$, we have
$$\sum_{M \in \Mm_i} \gbin{n}{\dim M}_q^{-1} + \frac{q}{q+1} \sum_{A \in \Aa} \gbin{n}{\dim A}_q^{-1} \leq 1.$$
Adding the two inequalities for $i = 1$ and $2$, we get
$$\sum_{M \in \Mm_1} \gbin{n}{\dim M}_q^{-1} + \sum_{M \in \Mm_2} \gbin{n}{\dim M}_q^{-1} + \frac{2q}{q+1} \sum_{A \in \Aa} \gbin{n}{\dim A}_q^{-1} \leq 2.$$
Now since $\Mm_1$, $\Mm_2$, and $\Aa$ partition $\Ff$, and $1 - \frac{2q}{q+1} = \frac{q-1}{q+1} > 0$, we have
$$\sum_{V \in \Ff} \gbin{n}{\dim V}_q^{-1} + \frac{q-1}{q+1} \sum_{A \in \Aa} \gbin{n}{\dim A}_q^{-1} \leq 2.$$
As a result \eqref{eq:LYMtype} follows, and, in the case of equality, $\Aa = \emptyset$.
\end{proof}

We also need the following straightforward lemma.
\begin{lem}[Thanh, 1998\cite{Thanh:98}]\label{lem:maximal}
Let  $m$ be a positive integer, $\delta$ a nonnegative real number, and $S = \{x_j\}_{j \in J}$ a finite multiset of nonnegative real numbers indexed by $J$. Then
$$\max\{|I| \mid I \subseteq J, \sum_{i \in I} x_i \leq \delta\} = K_\delta,$$
where $K_\delta$ is the largest integer (not exceeding $|J|$) such that the sum of the $K_\delta$ smallest elements in $S$ does not exceed $\delta$.
\end{lem}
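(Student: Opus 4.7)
The plan is to prove the equality by showing both inequalities, via a standard greedy/exchange argument. Because the quantity on the right is defined via a maximization, it suffices to exhibit a feasible set of size $K_\delta$ and to show that no feasible set can be larger.

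First I would establish the lower bound $\max\{|I| \mid I \subseteq J, \sum_{i \in I} x_i \leq \delta\} \geq K_\delta$. Enumerate the multiset $S$ in non-decreasing order as $x_{j_1} \leq x_{j_2} \leq \cdots \leq x_{j_{|J|}}$, and set $I_0 = \{j_1, j_2, \ldots, j_{K_\delta}\}$. By the definition of $K_\delta$, $\sum_{i \in I_0} x_i \leq \delta$, so $I_0$ is a feasible index set of size $K_\delta$, yielding the inequality.

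Next I would establish the reverse inequality. Let $I \subseteq J$ be any feasible set, i.e., $\sum_{i \in I} x_i \leq \delta$, and set $k = |I|$. Since the $x_j$ are nonnegative and the $k$ smallest elements of $S$ are, by definition, no larger than any other $k$ elements of $S$, we have
\[
\sum_{\ell = 1}^{k} x_{j_\ell} \leq \sum_{i \in I} x_i \leq \delta.
\]
Thus $k$ is an integer (not exceeding $|J|$) with the property that the sum of the $k$ smallest elements of $S$ does not exceed $\delta$. By the maximality in the definition of $K_\delta$, it follows that $k \leq K_\delta$, as required.

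There is no substantive obstacle here; the lemma is essentially the statement that a greedy choice of the smallest weights is optimal for maximizing cardinality under a weight budget, and both directions follow in one line each from the definitions. The only mild subtlety is to be careful that $S$ is a multiset, so the enumeration in non-decreasing order may repeat values, but this plays no role in either inequality.
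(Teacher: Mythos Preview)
Your proof is correct. The paper does not actually give its own proof of this lemma; it simply cites it from Thanh~\cite{Thanh:98} and moves on, so there is nothing to compare against beyond noting that your greedy/exchange argument is the standard one-line justification the authors evidently considered unnecessary to include.
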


Note that an index set $I$ for the above lemma is maximal if and only if the elements indexed by $I$ form a set of $K_\delta$ smallest numbers of $S$. This observation will be useful when we characterize our maximal family $\Ff$. We are now ready to prove Theorem \hyperlink{thmC}{C}.

\begin{proof}[\textbf{Proof of Theorem \hyperlink{thmC}{C}}] The intersection of two subspaces is a unique subspace, and so the union of any two consecutive levels of $\Lnq{n}{q}$ is butterfly-free. Also, any butterfly-free family is $(\textrm{Y}, \textrm{Y'})$-free. Hence,
$$\mathrm{ex}(\Lnq{n}{q}; \Yposet, \Ypposet) \geq \mathrm{ex}(\Lnq{n}{q}; \bfly) \geq \gbin{n}{\floor{n/2}}_q + \gbin{n}{\floor{n/2} +1}_q.$$
To establish equalities througout, we now show that 
$$\mathrm{ex}(\Lnq{n}{q};\Yposet, \Ypposet) \leq \gbin{n}{\floor{n/2}}_q + \gbin{n}{\floor{n/2} +1}_q.$$
Let $\Ff$ be a maximum-sized $(\textrm{Y}, \textrm{Y'})$-free family in $\Lnq{n}{q}$. We first show that it suffices to assume that $\Ff$ does not contain $\{0\}$ or $(\F_q)^n$. Indeed, assume that $\{0\} \in \Ff$. If there was a 1-dimensional subspace that was not in $\Ff$, then we could replace $\{0\}$ with this subspace and continue to have a $(\textrm{Y}, \textrm{Y'})$-free family of the same size as $\Ff$. On the other hand, if $\Ff$ contained $\{0\}$ and \emph{every} 1-dimensional subspace, then, since $\Ff$ is $\textrm{Y}$-free, it can contain at most one subspace containing each of the 1-dimensional subspaces. This would mean that the $\size{\Ff} \leq 2[n]_q+1$. For $n \geq 4$, $\gbin{n}{\floor{n/2}}_q + \gbin{n}{\floor{n/2} +1}_q$, the sum of the sizes of the two largest level in $\Lnq{n}{q}$, is bigger than $2[n]_q + 1$, contradicting the maximality of the size of $\Ff$. The remaining case is $n = 3$, and $\Ff$ contains $\{0\}$ and every 1-dimensional subspace. If $\Ff$ also contains $(\F_q)^3$, then it cannot contain any other subspace (since otherwise $\Ff$ would contain a $Y^\prime$). But now $\size{\Ff} = [3]_q+2 < 2[3]_q = \gbin{3}{1}_q + \gbin{3}{2}_q$, a contradiction. If $\Ff$ does not contain $(\F_q)^3$, then $\Ff$ cannot contain all 2-dimensional subspaces, and we can replace $\{0\}$ with a 2-dimensional subspace not in $\Ff$ and get a  $(\textrm{Y}, \textrm{Y'})$-free family of the same size as $\Ff$. Thus, without loss of generality we can assume that $\{0\}$ is not in $\Ff$. The argument for $(\F_q)^n$ would be identical.

Proposition \ref{prop:LYMtype} now applies and we have inequality \eqref{eq:LYMtype}. Apply Lemma \ref{lem:maximal} with the multiset $S=\left\{\gbin{n}{\dim V}_q^{-1} \right\}_{V \in \Lnq{n}{q}}$ and $\delta=2$. 
If $n$ is odd, then $\gbin{n}{\floor{n/2}}_q$ and $\gbin{n}{\floor{n/2}+1}_q$ are the two largest rank numbers; if $n$ is even, then $\gbin{n}{n/2}_q$ is the single largest rank number, and $\gbin{n}{n/2+1}_q = \gbin{n}{n/2-1}_q$ are the two second largest rank numbers. Thus, we see that $K_\delta$ as defined in Lemma \ref{lem:maximal} is equal to $\gbin{n}{\floor{n/2}}_q + \gbin{n}{\floor{n/2}+1}_q$. Since $\Ff$ indexes a subset of numbers in $S$ whose sum does not exceed $\delta = 2$, we deduce that
\begin{equation}\label{eq:upperbound} \size{\Ff} \leq K_\delta = \gbin{n}{\floor{n/2}}_q + \gbin{n}{\floor{n/2}+1}_q,\end{equation}
which gives us the desired upper bound for $\mathrm{ex}(\Lnq{n}{q}; \Yposet, \Ypposet)$.

It remains to show that the only families achieving the bounds are the union of two of the largest levels in the linear lattice. For a maximum-sized family, both inequalities \eqref{eq:LYMtype} and \eqref{eq:upperbound} must be equality. In particular, the equality for (\ref{eq:upperbound}) means that the family $\Ff$ indexes a subset of $K_\delta$ smallest elements in $S$. If $n$ is odd, then $\bc{\Lnq{n}{q}}{\floor{n/2}} \cup \bc{\Lnq{n}{q}}{\floor{n/2}+1}$ is the unique such family.

If $n$ is even, then $\Ff$ must consist of $\bc{\Lnq{n}{q}}{n/2}$ and $\gbin{n}{n/2-1}_q$ elements from $P = \bc{\Lnq{n}{q}}{n/2+1} \cup \bc{\Lnq{n}{q}}{n/2-1}$. We have to show that $\Ff$ cannot pick and choose and must contain either \emph{all} subspaces of dimension $n/2-1$ or all the subspaces of dimension $n/2+1$. View $P$ as a subposet of $\Lnq{n}{q}$, and denote $\Ff' = \Ff - \bc{\Lnq{n}{q}}{n/2} \subseteq P$.

Suppose there exists $X, Y \in \Ff'$ such that $Y < X$. Then $X \in \bc{\Lnq{n}{q}}{n/2+1}$, $Y \in \bc{\Lnq{n}{q}}{n/2-1}$, and we can find $A \in \bc{\Lnq{n}{q}}{n/2} \subset \Ff$ such that $Y < A < X$. But this cannot happen, since the case of equality for (\ref{eq:LYMtype}) requires that every element of $\Ff$ is either maximal or minimal in $\Ff$. Hence $\Ff'$ is an anti-chain in $P$.

As a graded poset, $P$ has height 2 and width $\gbin{n}{n/2-1}_q = \size{\Ff^\prime}$. By Dilworth's Theorem (see Anderson \cite[Theorem 3.2.1]{Anderson:02}), $P$ can be partitioned into $\gbin{n}{n/2-1}_q$ chains of length $1$. In fact, if $X < Y$ are two elements of $P$, we can always arrange the chain $X < Y$ to be one of the chains in this chain partition of $P$. Now $\Ff^\prime$ is an anti-chain with as many elements as chains in the partition. Hence, it has to contain exactly one of $X$ or $Y$. 

Now assume that $\Ff^\prime$ contained a subspace $A$ of dimension $n/2-1$,  and let $B$ be an arbitrary subspace of dimension $n/2+1$. Since the Hasse diagram of $P$ (viewed as a bipartite graph) is connected, we can find a sequence of subspaces $A = A_1, B_1, A_2, B_2, \ldots, A_n, B_n = B$ in $P$, such that, for $1 \leq i \leq n$, $A_i < B_i$ and, for $1 \leq i \leq n-1$, $B_i > A_{i+1}$. Our previous observation says that since $A_1 \in \Ff^\prime$ and $A_1 < B_1$, then $B_1 \not\in \Ff^\prime$. Continuing along the sequence of subspaces, we get that $A_1$, $A_2$, $\ldots$, $A_n$ must be in $\Ff^\prime$ while $B_1$, $B_2$, $\ldots$, $B_n = B$ cannot be in $\Ff^\prime$. We conclude that all subspaces in $\Ff^\prime$ have the same dimension and the proof is complete.
\end{proof}

\section{A conjecture}

Let $Y_k$ denote a poset with $3+k$ elements $a$, $b$, $c_0$, $c_1$, $\ldots$, $c_k$, and with relation $c_k < c_{k-1} < \ldots < c_1 < c_0 < a$ as well as $c_0 < b$. Note that $Y_0$ is just a $\vee$ and $Y_1$ is just $\textrm{Y}$. Likewise, define $Y_k^\prime$ to have the same elements as $Y_k$ but all the relations reversed. Hence, $Y_0^\prime$ is $\wedge$ and $Y_1^\prime$ is $\textrm{Y}^\prime$. Our Theorems \hyperlink{thmA}{A} and \hyperlink{thmC}{C} show that 
\begin{align*} \mathrm{ex}(\Lnq{n}{q}; Y_0, Y_0^\prime) & = \mathrm{ex}(\Lnq{n}{q};\ \twochain\ ) \\
\mathrm{ex}(\Lnq{n}{q}; Y_1, Y_1^\prime) & = \mathrm{ex}(\Lnq{n}{q};\ \threechain\ ).
\end{align*}
Let $P_k$ be a chain of length $k$ (and size $k+1$), then we conjecture that 
$$\mathrm{ex}(\Lnq{n}{q}; Y_k, Y_k^\prime)  = \mathrm{ex}(\Lnq{n}{q};\  P_{k+1}).$$
\newcommand{\ugst}{\relax}
\bibliographystyle{amsplain}

\providecommand{\bysame}{\leavevmode\hbox to3em{\hrulefill}\thinspace}
\providecommand{\MR}{\relax\ifhmode\unskip\space\fi MR }

\providecommand{\MRhref}[2]{%
  \href{http://www.ams.org/mathscinet-getitem?mr=#1}{#2}
}
\providecommand{\href}[2]{#2}

\end{document}